\author{Andriy Haydys\\
}
\title{The infinitesimal multiplicities and orientations of the blow-up set of the Seiberg--Witten equation with multiple spinors}
{\small \date{August 24, 2018}}
\begin{document}
\maketitle

 \begin{abstract}
  I construct multiplicies and orientations of tangent cones to any blow-up set $Z$ for the  Seiberg--Witten equation with multiple spinors. 
  This is used to prove that $Z$ determines a homology class, which is shown to be equal to the Poincar\'{e} dual of the first Chern class of the determinant line bundle.  
  I also obtain a lower bound for the 1-dimensional Hausdorff measure of $Z$. 
 \end{abstract}

\section{Introduction}

Let $M$ be a closed oriented Riemannian three-manifold.
Denote by $\slS$ the spinor bundle associated with a fixed $\Spin$--structure on $M$.
Also fix a $\U(1)$--bundle $\sL$ over $M,$ a positive integer $n \in \N$ and an $\SU(n)$--bundle $E$ together with a connection $B$.
Consider triples $(A,\Psi,\tau) \in \sA(\sL) \times \Gamma\(\Hom(E,\slS \otimes \sL)\)\times (0,\infty)$ consisting of a connection $A$ on $\sL$, an $n$--tuple of twisted spinors $\Psi$, and a positive number $\tau$ satisfying the \emph{Seiberg--Witten equation with $n$ spinors}:
\begin{equation}
  \label{Eq_nSW}
  \begin{split}
    \|\Psi\|_{L^2} &= 1, \\
    \slD_{A\otimes B} \Psi &= 0, \qand \\
    \tau^2 F_A &= \mu(\Psi).
  \end{split}  
\end{equation}
A reader can find further details about~\eqref{Eq_nSW} as well as a motivation for studying these equations in \cite{HaydysWalpuski15_CompThm_GAFA,Haydys17_G2SW_toappear}.
The main result of \cite{HaydysWalpuski15_CompThm_GAFA} states in particular, that if $(A_k,\Psi_k,\tau_k)$ is an arbitrary sequence of solutions such that $\tau_k\to 0$, then there is a closed nowhere dense subset $Z\subset M$ and a subsequence, which converges to a limit $(A,\Psi,0)$ on compact subsets of $M\setminus Z$ possibly after applying gauge transformations.
Moreover, $(A,\Psi, 0)$ is a solution of~\eqref{Eq_nSW} on $M\setminus Z$ and the function $|\Psi|$ has a H\"older-continuous extension to $M$ such that $Z=|\Psi|^{-1}(0)$.

It follows from the proof of the above mentioned result that if a sequence $(A_k,\Psi_k,\tau_k)$ of solutions of~\eqref{Eq_nSW} converges to $(A,\Psi,0)$, then the set
\begin{equation}
	\label{Eq_CurvBlowUp}
\Bigl\{ m\in M \mid \ \exists r_k\to 0\ \text{ s.t. }\ r_k\int_{B_{r_k}(m)} |F_{A_k}|^2  \to\infty \Bigr \}
\end{equation}
is contained in $Z$, where $B_r(m)$ is the geodesic ball of radius $r$ centered at $m$.
This motivates the following.
\begin{defn}
	\label{Defn_BlowUpSet}
	A closed nowhere dense set $Z\subset M$ is called a blow-up set for the Seiberg--Witten equation with multiple spinors, if there is a  solution $(A, \Psi, 0)$ of~\eqref{Eq_nSW} defined over $M\setminus Z$ such that the following holds:
	\begin{enumerate}[(i)]
		\item
		\label{It_HoelderCont}
		 $|\Psi|$ extends as a H\"older-continuous function to all of $M$ and $Z=|\Psi|^{-1}(0)$;
		\item 
		\label{It_NablaPsi}
		$\int_{M\setminus Z} |\nabla^A\Psi|^2<\infty$.
	\end{enumerate}  
\end{defn}

As explained in Remark~\ref{Rem_W12forLimits} below,~\ref{It_NablaPsi} holds automatically provided $(A,\Psi, 0)$ is a limit of the Seiberg--Witten monopoles with $\tau_k\to 0$, $\tau_k\neq 0.$
The arguments used in this manuscript require $|\Psi|$ to be continuous only; 
The H\"older continuity of $|\Psi|$ is needed to ensure that the results of~\cite{Taubes14_ZeroLoci_Arx} can be applied.
In particular, a combination of~\cite{HaydysWalpuski15_CompThm_GAFA}*{App. A} and~\cite{Taubes14_ZeroLoci_Arx}*{Thm 1.3} yields that the Hausdorff dimension of  $Z$ is at most one.
Notice also that~\ref{It_HoelderCont} can be replaced by a weaker condition, for example~\cite{Taubes14_ZeroLoci_Arx}*{(1.5)}. 

\medskip

Assume for a while that $n=2$, which simplifies the upcoming discussion somewhat. 
By~\cite{Haydys:12_GaugeTheory_jlms} (see also~\cite{HaydysWalpuski15_CompThm_GAFA}*{App.\,A}) the gauge-equivalence class $[A,\Psi, 0]$ of a solution of~\eqref{Eq_nSW} on $M\setminus Z$ can be interpreted as a $\Z/2$ harmonic spinor in the sense of~\cite{Taubes14_ZeroLoci_Arx}*{(1.3)}.
This correspondence is important for the intended applications of the Seiberg--Witten theory with multiple spinors, see for example~\cite{Haydys17_G2SW_toappear}.
However, by interpreting the limit $[A,\Psi, 0]$ as a $\Z/2$ harmonic spinor  we loose information about the background  $\Spin^c$-structure (or, in our notations, about the line bundle $\sL$).
This raises naturally the question of how to recover this piece of information.   
Another question, which naturally appears in this context, is the following: Can any $\Z/2$ harmonic spinor  appear as a limit of the Seiberg--Witten monopoles?

In this preprint I give an answer to these questions by showing that the blow-up set $Z$ can be equipped with a certain infinitesimal structure, which encodes in particular the missing piece of information about the background  $\Spin^c$-structure.  

To be more precise, it follows from the results of~\cite{Taubes14_ZeroLoci_Arx} that at each point $z\in Z$ there is a tangent cone $\cZ_*=\cup \ell_j$ consisting of finitely many rays.
I show that each ray $\ell_j$ can be equipped with a weight $\theta_*^j\in\Z_{\ge 0}$ and an orientation provided $\theta_*^j\neq 0$. 
The collection of all weights and orientations is abbreviated as $(\theta, or)$. 
For example, if $Z$ is a smooth 1-dimensional submanifold, $\theta$ is a locally constant function on $Z$, i.e., each connected component of $Z$ is equipped with a non-negative integer multiplicity.
Moreover, the components with non-vanishing multiplicities are also oriented. 

The main result of this preprint is the following theorem. 
A more precise version is stated as~\autoref{Thm_ZeroLocusPDdetL_moreprecise}.

\begin{thm}\label{Thm_ZeroLocusPDdetLB}
Let $Z$ be a blow-up set for the Seiberg--Witten equation with two spinors.
The triple $(Z,\theta,{or})$ determines a class $[Z,\theta,{or}]\in H_1(M,\Z)$. This satisfies
\begin{equation}\label{Eq_HomClasEqualsPDc1}
[Z,\theta,{or}]=\PD(c_1(L)),
\end{equation}
where $L=\sL^2$ is the determinant line bundle and $\PD$ stays for the Poincar\'{e} dual class.
\end{thm}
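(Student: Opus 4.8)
The strategy is to extract from the limiting equation a canonical trivialisation of $L$ over $M\setminus Z$ and to read off $c_1(L)$ from the way this trivialisation degenerates along $Z$. On $M\setminus Z$ the triple $(A,\Psi,0)$ solves \eqref{Eq_nSW} with $\tau=0$, so $\mu(\Psi)=0$ while $|\Psi|>0$. The vanishing of $\mu(\Psi)$ forces $\Psi\Psi^{*}$ to be a positive multiple of the identity on $\slS\otimes\sL$, and since both $E$ and $\slS\otimes\sL$ have rank $2$, this means that $\Psi$ is at each point of $M\setminus Z$ a positive multiple of a unitary isomorphism $E\xrightarrow{\ \sim\ }\slS\otimes\sL$. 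Hence $s:=\Lambda^{2}\Psi$ is a nowhere–vanishing section of $\Hom\bigl(\Lambda^{2}E,\Lambda^{2}(\slS\otimes\sL)\bigr)$ over $M\setminus Z$; using the canonical isomorphisms $\Lambda^{2}E\cong\underline{\C}$ (as $E$ is an $\SU(2)$–bundle) and $\Lambda^{2}\slS\cong\underline{\C}$ (as $\slS$ is an $\SU(2)$–bundle), one has $\Hom\bigl(\Lambda^{2}E,\Lambda^{2}(\slS\otimes\sL)\bigr)\cong\sL^{2}=L$ canonically, so $s$ is a section of $L$ trivialising $L$ over $M\setminus Z$. Finally $|s|$ is a fixed multiple of $|\Psi|^{2}$, which extends continuously to $M$ and vanishes exactly on $Z$; therefore $s$ extends to a continuous section of $L$ over $M$ whose zero set is precisely $Z$.

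Next I would analyse the local structure of $s$ along $Z$. On the complement of its finitely many singular points, $Z$ is a smooth arc, and I would use the asymptotic description of $\Psi$ near $Z$ from \cite{Taubes14_ZeroLoci_Arx} — together with the structure of the tangent cones $\cZ_{*}=\cup\ell_{j}$ recalled earlier in this preprint — to show that for a small $2$–disc $D$ transverse to such an arc the restriction $s|_{\partial D}$ (trivialising $L|_{\partial D}$) has a well–defined winding number, that this integer is locally constant along the arc, and that it equals the weight $\theta$ of that arc, with the induced co–orientation matching $or$. I then have to treat a singular point $z\in Z$: the decisive point is the \emph{balancing identity}, i.e.\ that the signed winding numbers of the arcs of $Z$ emanating at $z$ add up to zero, equivalently that $s$ restricted to a small round sphere centred at $z$ extends over the enclosed ball as a section of $L$ vanishing only along the rays $\ell_{j}$ with the prescribed local degrees $\theta_{*}^{j}$. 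Granting this, $(Z,\theta,or)$ is a locally finite integral $1$–cycle in $M$ and hence defines a class $[Z,\theta,or]\in H_{1}(M,\Z)$, which is the first assertion of the theorem; this is also the point at which the weights and orientations constructed in the earlier part of the paper get identified with the purely topological winding numbers, as made precise in \autoref{Thm_ZeroLocusPDdetL_moreprecise}.

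To obtain \eqref{Eq_HomClasEqualsPDc1} I would then identify $c_{1}(L)=e(L)$ with the Poincar\'e dual of this zero cycle. After a perturbation of $s$ supported in an arbitrarily small tubular neighbourhood of $Z$ — which, being local and leaving $c_{1}(L)$ untouched, changes neither the homology class of the zero cycle nor the discussion above — one may assume $s$ is smooth and transverse to the zero section, with zero cycle homologous to $(Z,\theta,or)$, whence $c_{1}(L)=\PD[Z,\theta,or]$ by the standard identification of the first Chern (Euler) class with the zero locus of a generic section. One can also see the identity directly on the level of intersection numbers: for an embedded oriented surface $\Sigma\subset M$ meeting $Z$ transversally in its smooth part, $\langle c_{1}(L),[\Sigma]\rangle$ equals the obstruction to extending $s/|s|$ over $\Sigma$, namely the sum over $p\in\Sigma\cap Z$ of the local winding numbers, which is $[Z,\theta,or]\cdot[\Sigma]$; this determines the free part of $\PD(c_{1}(L))-[Z,\theta,or]$, and the perturbation argument disposes of the possible torsion.

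The main obstacle is the local analysis of the second paragraph: given only the weak analytic control on $\Psi$ near the a priori singular set $Z$ — the harmonicity of the associated $\Z/2$ spinor, the finiteness $\int_{M\setminus Z}|\nabla^{A}\Psi|^{2}<\infty$, and Taubes' tangent–cone theorem — one must show that the phase of $\det\Psi$ winds a well–defined, integer, locally constant number of times about each arc of $Z$, and establish the balancing identity at the singular points so that the weighted oriented set $Z$ is genuinely a cycle. Everything else is comparatively formal.
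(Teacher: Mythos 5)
Your first paragraph is fine, and in fact a nice variant of what the paper does: since $\mu(\Psi)=0$ forces $\Psi$ to be a positive multiple of a unitary isomorphism when $n=2$, the section $\det\Psi\in\Gamma(M\setminus Z;L)$ extends continuously by zero over $Z$ (the paper instead uses flatness of $A$ with $\Z/2$ holonomy, takes a parallel section $s_0$ of $L$ over $M\setminus Z$, and sets $s=|\Psi|s_0$; your construction sidesteps that input). The gap is in the second and third paragraphs, where you assume that ``on the complement of its finitely many singular points, $Z$ is a smooth arc'' and then treat $(Z,\theta,or)$ as ``a locally finite integral $1$--cycle''. No such regularity is available, and the paper explicitly stresses this: what \cite{Taubes14_ZeroLoci_Arx} gives (and what \autoref{Lem_BlowUpIsLocGraphlike} records) is only that $Z$ is compact, of Hausdorff dimension at most one, with a tangent cone consisting of finitely many rays at every point and a tangent line off an at most countable set. $Z$ need not be an embedded graph, need not have isolated singular points, and a priori does not carry the structure of a rectifiable $1$--current with the multiplicity $\theta$ — that is precisely the Conjecture stated in the concluding remarks, not an ingredient of the proof. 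Consequently both of your key steps collapse: the ``balancing identity at the singular points'' cannot be formulated vertexwise (the paper encodes it instead through \autoref{Defn_FlowGeneral}, via intersections with transverse surfaces, and verifies it from rescaling limits in \autoref{Lem_SectionDeterminesFlow}), and the very definition of the class $[Z,\theta,or]$ is not the fundamental class of a cycle supported on $Z$.

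This is exactly the point of the paper's main technical work, which your plan omits: one first collapses $Z$ onto a genuine embedded graph by a map $f\simeq\mathrm{id}_M$ (\autoref{Lem_ContractingMap}), pushes the infinitesimal flow forward to a flow on the graph, defines $[Z,\theta,or]:=[f(Z),f_*(\theta,or)]$, and proves independence of the collapsing map (\autoref{Prop_NaturalHomPD}); only for the embedded graph is the transverse-perturbation comparison with a generic smooth section legitimate (\autoref{Prop_ZeroLocusEmbeddedGraph}), giving $\PD(c_1(L))$. Your closing argument has the same circularity: to say that a perturbation of $s$ supported near $Z$ ``changes neither the homology class of the zero cycle'' you must already be able to compare $(Z,\theta,or)$ with the perturbed zero locus, i.e.\ you need the collapsing/well-definedness machinery (or a proof of the rectifiable-current structure), and the surface-intersection computation likewise presupposes that $\Sigma\cap Z$ is finite and lies in a smooth part of $Z$, which is unjustified. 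What survives of your outline is the section $\det\Psi$, the definition of local winding numbers via rescaling limits of $(Z,s)$ (the paper's \autoref{Lem_BlowUpIsLocGraphlike}\,(ii) is needed to make even this precise), and the statement of \autoref{Thm_ZeroLocusPDdetL_moreprecise}; the bridge from the locally graph-like set to a homology class is missing.
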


I would like to stress that no extra assumptions on the Riemannian metric on $M$ or the regularity of $Z$ are required in Theorem~\ref{Thm_ZeroLocusPDdetLB}.  
In particular, viewing $Z$ as a subset of $M$ only, the homology class of $Z$ may be ill defined.

An interpretation of Theorem~\ref{Thm_ZeroLocusPDdetLB} is that  there are topological restrictions on blow-up sets for the Seiberg--Witten equation with a fixed $\Spin^c$-structure.
For example, if $L$ is non-trivial, then $Z$ can not be empty. 
Although this follows immediately from Theorem~\ref{Thm_ZeroLocusPDdetLB}, this statement can be proved directly by an elementary argument. 
Indeed, assume that for some non-trivial $L$ there is a solution $(A,\Psi, 0)$ of~\eqref{Eq_nSW} such that $\Psi$ vanishes nowhere, i.e., $Z=\varnothing$. 
The equation $\mu(\Psi)=\Psi\Psi^*-\tfrac 12 |\Psi|^2=0$ implies that $\Psi$ is surjective, which in turn shows that $\Psi$ is an isomorphisms  provided $n=2$.
Hence, we obtain $\Lambda^2 E\cong \Lambda^2(\slS\otimes\sL)\cong \sL^2=L$, which shows that $L$ is trivial thus providing a contradiction.
This argument shows in fact, that $Z$ must be infinite if $L$ is non-trivial.
A generalization of this is stated in~\autoref{Thm_HausdMeasDimOfZ}, which gives a lower bound for the 1-dimensional Hausdorff measure of $Z$ in terms of $c_1(L)$. 
 

The proof of \autoref{Thm_ZeroLocusPDdetLB} is based on the following observation: Any blow-up set for the Seiberg--Witten equation with multiple spinors is a zero locus of some continuous section $s$ of $L$ (details can be found at the beginning of Section~\ref{Sect_BlowUpSet}).
This of course immediately implies the statement of \autoref{Thm_ZeroLocusPDdetLB} if $Z$ is sufficiently regular, for example smooth. 
However, a priori $Z$ does not need to be smooth. 

\medskip

The reader may wonder why should we care about infinitesimal weights and orientations, since, after all, one can imagine more conventional ways to keep track of information about the determinant line bundle. 
One reason is as follows: Since the blow-up set for the Seiberg--Witten equations is never empty provided the determinant line bundle is non-trivial,  a natural question is which structure a blow-up set can be equipped with. 
Since \eqref{Eq_CurvBlowUp} is contained in $Z$, it seems reasonable to expect $Z$ to be the support of some sort of $\delta$-function, where components of $Z$ may have different multiplicities and orientations.
The approach utilized here provides one way to formalize this.

Another reason  comes from the conjectural relation between the Seiberg--Witten monopoles and $\rG_2$ instantons~\cite{Haydys17_G2SW_toappear}.
It seems plausible that if gauge--theoretic invariants of  compact $\rG_2$ manifolds in the sense of~\cite{DonaldsonThomas:98, DonaldsonSegal:09} exist, their construction should take into account not only honest $\rG_2$ instantons, but also $\rG_2$ instatons with singularities along one dimensional subsets~\cite{Donaldson18_KaehlerGeomExceptHolon_Arx}*{Sect. 3.5}. 
To the best of author's knowledge, no direct evidence is known at present, however, by comparing with Donaldson--Thomas invariants for Calabi--Yau three-folds, it seems plausible that singular $\rG_2$ instantons should play a r\^ole indeed.  
If this is true, it seems reasonable that the Seiberg--Witten monopoles with a non-empty blow-up set may be related to singular $\rG_2$ instantons. 
If so, weights and orientations of $Z$ are likely to encode information about the singularity of the corresponding $\rG_2$ instanton. 
However, how much of this, if any, is true goes beyond the goals of this preprint. 

\medskip

This manuscript is organized as follows. 
In Section~\ref{Sect_ZeroLocus} I discuss the topology of the zero locus of a continuous section of a complex line bundle over a closed three-manifold. 
The most important point is that the zero locus is Poincar\'e dual to the first Chern class of the line bundle even if the classical transversality condition is replaced by a weaker one, see \autoref{Prop_NaturalHomPD}.
This section is self-contained and no acquaintance  with the Seiberg--Witten equations is required.    

In Section~\ref{Sect_BlowUpSet} I prove the main result of this manuscript, \autoref{Thm_ZeroLocusPDdetLB}, and discuss some consequences.
\autoref{Thm_ZisPDforLargeN} below generalizes   \autoref{Thm_ZeroLocusPDdetLB} for any $n\ge 2$.	
Notice, however, that an extra  hypothesis, which becomes vacuous for $n=2$, appears in Theorem~\ref{Thm_ZisPDforLargeN}. 
Examples of solutions of \eqref{Eq_nSW} with $\tau=0$ and even $n\ge 2$ satisfying this hypothesis are constructed in in Section~\ref{Sect_ConstrFueter}. 
This also yields explicit examples of Fueter sections~\cite{HaydysWalpuski15_CompThm_GAFA}*{App.~A} with values in the moduli space  $\mathring M_{1,n}$ of framed centered charge one $\SU(n)$-instantons over $\R^4$.

Let me also note in passing that many aspects discussed here have analogues for other gauge-theoretic problems such as flat $\PSL(2;\C)$-connections on three-manifolds~\cite{Taubes13_PSL2Ccmpt}. 
This will be discussed in detail elsewhere~\cite{Haydys_SWflatPSL2R_InPrep}.

In a slightly different direction, presumably much of the material  of Section~\ref{Sect_ZeroLocus} can be generalized to higher dimensions. 
This in turn may be of interest for the analysis of blow up sets of some gauge-theoretic equations in dimension four such as the Seiberg--Witten equations with multiple spinors~\cite{Taubes16_SWDim4_Arx}, Kapustin--Witten equations~\cite{Taubes18_SequencesKapustWitten_Arx}, $\SL(2;\C)$ anti-self-duality equations~\cite{Taubes13_CxASD_Arx}, and Vafa--Witten equations~\cite{Taubes17_VafaWitten_Arx}.

\medskip

\textsc{Acknowledgement.} I am grateful to M.\;B\"ockstedt, M.\;Callies, A.\;Doan, and S.\;Goette for helpful discussions and also to an anonymous referee for useful suggestions and comments. 
This research was partially supported by Simons Collaboration on Special Holonomy in Geometry, Analysis, and Physics.

\section{The zero locus of a continuous section of a line bundle}
\label{Sect_ZeroLocus}

It is a classical fact that if a smooth section $s$ of a line bundle $L$ intersects the zero locus transversely, than $s^{-1}(0)$ is a smooth oriented embedded submanifold whose homology class is the Poincar\'{e} dual to the first Chern class of $L$.
Even though a \emph{generic} section does intersect the zero section transversely, in applications one can not always assume that a section at hand is in fact generic. 
This may happen for instance when $s$ satisfies some sort of PDE and perturbations are not readily available or do not fit the set-up.

In this section the structure and topology of $s^{-1}(0)$ is studied in the case when $s$ is assumed to be continuous only (and the base manifold $M$ is three-dimensional).
In particular, no transversality arguments are available and $s^{-1}(0)$ is allowed to have singularities.

\subsection{The case of an embedded graph}

In this subsection I prove that if the zero locus $Z$ of a continuous section of a complex line bundle $L$ is an embedded graph, then edges can be equipped with weights and orientations resulting in a singular $1$-chain, whose homology class represents $\PD(c_1(L))$.  
Although the material of this section is elementary, this provides a useful toy model for what follows in the next section.

\medskip

Let $G$ be an (abstract) graph with a finite set of edges $E$. 

\begin{defn}[\cite{GaroufLoebl06_NonCommJonesFn}*{Def.\,2.2}]
	\label{Defn_FlowOnGraph}
	A \emph{flow} on $G$ consists of a weight function $\theta\colon E\to\Z_{\ge 0}$ and orientations of edges with non-zero weights such that for each vertex $v$ we have
	\begin{equation}\label{Eq_Flow}
	\sum\limits_{e \text{ begins at } v}\theta(e)= \sum\limits_{e \text{ ends at } v}\theta(e),
	\end{equation}	
\end{defn}

Notice that there is one minor difference between the above definition and the one of ~\cite{GaroufLoebl06_NonCommJonesFn};
Namely, the weights in~\cite{GaroufLoebl06_NonCommJonesFn} are assumed to be positive whereas here $\theta$ is allowed to attain the zero value. 

Observe that  the set $\Flow (G)$ of all flows on $G$ has a natural structure of an abelian group.
Indeed, declare the sum of two flows $(\theta_1, or_1)$ and $(\theta_2, or_2)$ to be $(\theta, or)$ according to the following rule:
\begin{itemize}
	\item If $e\in E$ has the same orientation \wrt $or_1$ and $or_2$, then this is also the orientation of $e$ \wrt  $or$ and $\theta(e)=\theta_1(e)+\theta_2(e)$;
	\item If the orientations of $e$ \wrt $or_1$ and $or_2$ are opposite or one of the weights vanishes, declare $\theta(e):=|\theta_1(e)-\theta_2(e)|$;  If $\theta(e)\neq 0$, declare the orientation of $e$ to be the one corresponding to the bigger weight. 
\end{itemize}
Clearly, the inverse element is obtained by reversing the orientations of all edges.

Equivalently, we can also choose arbitrarily a reference orientation of \emph{all} edges. 
Then a flow $(\theta, or)$ on $G$ can be conveniently interpreted as a map $ \Theta\colon E\to\Z$, where the sign of $\Theta(e)$  encodes the difference between orientations of $e$ \wrt $or$ and the reference orientation. 
In particular, this shows that $\Flow (G)$ is a subgroup of the free abelian group generated by $E$; Hence, $\Flow(G)$ is also free and finitely generated.

\begin{defn}
	A subset $Z$ of a manifold $M$ is called \emph{an embedded graph} if there is a finite subset $V\subset Z$, whose elements are called vertices, such that $Z\setminus V$ consists of  finitely many connected components; 
	The closure of each component, which is called an edge, is a smooth embedded 1-dimensional submanifold, possibly with boundary.
\end{defn}

Let $E$ denote the set of all edges of an embedded graph $Z$.
Notice that by introducing extra vertices if necessary we can always assume that each edge contains at least one vertex (loops are allowed).

Let $Z$ be an embedded graph equipped with a flow. 
Assume for simplicity that the ambient manifold $M$ is compact. 
Clearly, $\sum_{e\in E}\theta(e)e$ is a singular  $1$-cycle in $M$. 
Denote
\[
[Z,\theta, or]:=\Bigl [\;  \sum\limits_{e\in E}\theta(e)e\; \Bigr ]\in H_1(M;\Z).
\] 
If confusion is unlikely to arise, we write simply   $[Z]:=[Z,\theta, or]$ for brevity.
It should be pointed out that this notation does not suggest that the corresponding class depends on $Z$ as a set only.

The following proposition summarizes the above considerations.

\begin{proposition}
	\label{Prop_HomolClassEmbGraph}
	For each embedded graph $Z$ in a compact manifold $M$ there is a natural homomorphism 
	\[
	\pushQED{\qed} 
	\Gamma\colon \Flow (Z)\To H_1(M;\Z), \qquad (\theta, or)\mapsto [Z,\theta, or].\qedhere
	\]
\end{proposition}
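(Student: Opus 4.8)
The statement is essentially a condensed version of the discussion preceding it, so the plan is to organize that discussion into an explicit construction of $\Gamma$ and then to check that it does not depend on the auxiliary data. First I would fix, once and for all, a reference orientation of every edge $e\in E$ together with a singular $1$-simplex $\sigma_e\colon\Delta^1\to M$ whose image is $e$ and which traverses $e$ according to the reference orientation; for an edge that is a circle I use the vertex it is required to contain as the common image of the two endpoints of $\Delta^1$. Given a flow $(\theta, or)$, encode it as the map $\Theta\colon E\to\Z$ described in the excerpt, so that $|\Theta(e)|=\theta(e)$, the sign of $\Theta(e)$ records whether $or$ agrees with the reference orientation on $e$, and $\Theta(e)=0$ when $\theta(e)=0$; then set
\[
c(\theta, or)\ :=\ \sum_{e\in E}\Theta(e)\,\sigma_e\ \in\ C_1(M;\Z).
\]
Since $e\mapsto\sigma_e$ is a fixed choice and $\Flow(Z)$ is a subgroup of the free abelian group $\Z^E$ on which addition corresponds to adding the associated maps $\Theta$, the assignment $(\theta, or)\mapsto c(\theta, or)$ is a homomorphism from $\Flow(Z)$ to the group of singular $1$-chains in $M$.

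Next I would check that $c(\theta, or)$ is a cycle. For an edge $e$ with $\theta(e)\neq 0$ let $v^+_e, v^-_e\in V$ denote the vertices at which $e$ ends, resp.\ begins, with respect to the orientation $or$; then $\Theta(e)\,\partial\sigma_e=\theta(e)(v^+_e-v^-_e)$, the sign of $\Theta(e)$ exactly compensating the discrepancy between $or$ and the reference orientation, so $\partial c(\theta,or)=\sum_e\theta(e)(v^+_e-v^-_e)$. Collecting the terms attached to a fixed $v\in V$, its coefficient equals $\sum_{e\ \text{ends at}\ v}\theta(e)-\sum_{e\ \text{begins at}\ v}\theta(e)$, which vanishes by~\eqref{Eq_Flow} (a loop at $v$ contributes $\theta(e)$ to each sum, and correspondingly $\partial\sigma_e=v-v=0$ for such an edge). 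Hence $c(\theta,or)$ is a $1$-cycle, and I define $\Gamma(\theta,or):=[c(\theta,or)]\in H_1(M;\Z)$; by construction this is the class $[Z,\theta,or]$ of the statement, and $\Gamma$ is a homomorphism because $c$ is.

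Finally I would argue that $\Gamma$ is natural in the sense of being independent of the auxiliary choices. Reversing the reference orientation of an edge $e$ replaces $\Theta(e)$ by $-\Theta(e)$ and $\sigma_e$ by a reverse parametrization $\bar\sigma_e$, and $\Theta(e)\sigma_e-(-\Theta(e))\bar\sigma_e=\Theta(e)(\sigma_e+\bar\sigma_e)$ is a boundary, so $[c(\theta,or)]$ is unchanged. Replacing $\sigma_e$ by another simplex with the same image and orientation alters $c(\theta,or)$ by a $1$-cycle supported in $e$, which bounds in $e$ because $H_1(e;\Z)$ is either $0$ (edge an arc) or is generated by the chosen direction (edge a circle); in either case the class is unchanged. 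And enlarging $V$ by subdividing an edge into two pieces — extending the flow by assigning both pieces the same weight and the orientation that ``flows through'' the new vertex — does not change the class, since the two new parametrizing simplices concatenate to the old one up to a boundary. Thus $\Gamma$ depends only on the embedded graph $Z$ together with its flow. I do not anticipate a genuine obstacle here: every step is elementary, and the only point requiring care is keeping the three layers of orientation data mutually consistent — the reference orientation of an edge, the orientation $or$ recorded by the sign of $\Theta$, and the orientation of $\Delta^1$ — together with the minor bookkeeping of treating circle-edges via their mandatory vertex so that~\eqref{Eq_Flow} is meaningful for them.
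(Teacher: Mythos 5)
Your construction is correct and is essentially the paper's own argument: the paper defines $[Z,\theta,or]$ as the class of the singular $1$-cycle $\sum_{e}\theta(e)e$, with the cycle condition coming from \eqref{Eq_Flow} and the homomorphism property from the identification of $\Flow(Z)$ with a subgroup of the free abelian group on the edge set via $\Theta\colon E\to\Z$. You have merely written out the bookkeeping (parametrizing simplices, loops, independence of reference orientations and subdivision) that the paper leaves implicit.
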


Let $s$ is a continuous section of a complex line bundle $L$ over a closed three-manifold $M$ such that the zero locus $Z:=s^{-1}(0)$ is an embedded graph.
Denote by $E$ the set of edges, which is finite. 
The section $s$ may be used to produce a flow on $Z$ as follows.
 Pick an edge $e\in E$ and a point $p$ on $e$ such that $p$ is not a vertex.
 Choose $\e>0$ so small that $H_1\bigl ( B_\e (p)\setminus e;\Z \bigr )\cong \Z$, where $B_\e(p)$ is a ball of radius $\e$ in a chart centered at $p$.
 Furthermore, chose an embedded circle $\gamma_e\subset B_\e(p)\setminus e$ such that $\gamma_e$ generates  $H_1\bigl ( B_\e (p)\setminus e;\Z \bigr )$ for some choice of orientation on $\gamma_e$.
 Notice that at this point $\gamma_e$ is not assumed to be equipped with an orientation.
 
 Furthermore, choose a local trivialization of $L$ over $B_\e (p)$ so that the restriction of $s$ to $\gamma_e$ can be thought of as a map $\gamma_e\to \C^*$. 
 Declare 
 \begin{equation}
	 \label{Eq_WeightForGraphs}
 \theta (e):=\bigl | \deg (s\colon \gamma_e\to \C^*)\bigr|\in\Z_{\ge 0} ,
 \end{equation}
 where $\deg$ denotes the topological degree. 
 Notice that this definition implicitly requires a choice of orientation of $\gamma_e$, however for the absolute value of the degree this choice is immaterial. 
 If $\theta (e)\neq 0$, there is a unique orientation of $\gamma_e$ such that 
 \begin{equation}
	 \label{Eq_OrientationCondition}
 \deg (s\colon \gamma_e\to \C^*)>0.
 \end{equation}
 Since the ambient manifold $M$ is oriented, the orientation of $\gamma_e$ yields a unique orientation of $e$. 
 
 Clearly, the map~\eqref{Eq_WeightForGraphs} as well as the orientations of edges with non-vanishing weights depends only on $s$ but not on the choices made in its definition.

\begin{proposition}
	\label{Prop_ZeroLocusEmbeddedGraph}
	Let $M$ be an oriented three-manifold. 
	Let $s$ be a continuous section of a complex line bundle $L\to M$ whose zero locus $Z:=s^{-1}(0)$ is an embedded graph with the finite set of edges $E$.
	Then the following holds:
	\begin{enumerate}[(i)]
		\item \label{It_FlowOnZeroLocusIfGraph}
		$(\theta, or)$ is a flow on $Z$, where $\theta$ is defined by~\eqref{Eq_WeightForGraphs} and the orientation of edges with non-zero weights is determined by~\eqref{Eq_OrientationCondition};
		\item \label{It_ZisPDc1IfGraph}
		If $M$ is also compact, then 
		\[
		\bigl [ Z,\theta, or \bigr ]=\PD (c_1(L)),
		\]
		where the right hand side of the equation denotes the Poincar\'{e} dual to the first Chern class of $L$.
	\end{enumerate}
\end{proposition}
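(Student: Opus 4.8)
The plan is to verify the two assertions locally and then globalize. For part~\eqref{It_FlowOnZeroLocusIfGraph}, the point is to check the flow condition~\eqref{Eq_Flow} at each vertex $v$. First I would fix a small ball $B$ around $v$ in a chart, chosen so that $B$ meets $Z$ only in the edges emanating from $v$ (plus $v$ itself), and so small that $L$ is trivialized over $B$. In this trivialization $s$ becomes a continuous map $B\to\C$, nonvanishing on $\partial B\setminus Z$; since the edges are transverse to a generic small sphere $S=\partial B$, the set $S\cap Z$ consists of finitely many points, one on each edge $e$ at $v$. Around the puncture on $e$, the restriction $s|_S$ has a well-defined local winding number, and by the very definition~\eqref{Eq_WeightForGraphs}--\eqref{Eq_OrientationCondition} this signed winding equals $\pm\theta(e)$ with the sign fixed by whether $or$ agrees with the outward orientation at $v$. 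The key topological input is then that the total signed winding of $s|_S\colon S\setminus(S\cap Z)\to\C^*$ around the punctured sphere is zero, because $s$ extends continuously (indeed, as a map to $\C$, possibly with zeros) over the $2$-disk that $S$ bounds inside $B$ once we remove a neighborhood of $Z\cap B$ — more precisely, $S\setminus(S\cap Z)$ bounds in $B\setminus Z$, and $s$ is a nonvanishing section there, so the sum of local degrees vanishes. Collecting signs according to whether each edge begins or ends at $v$ turns this vanishing into exactly~\eqref{Eq_Flow}. I expect the main technical nuisance here to be the transversality/genericity of the small sphere and the bookkeeping that identifies the local winding number of $s|_S$ around the puncture on $e$ with $\theta(e)$ up to the correct sign; this is where one must be careful that the ambient orientation of $M$, the orientation of the linking circle $\gamma_e$, and the orientation of $e$ are all compatibly normalized.

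For part~\eqref{It_ZisPDc1IfGraph}, I would argue by reduction to the smooth transverse case. Choose a smooth section $s'$ of $L$ that is $C^0$-close to $s$ away from $Z$ and transverse to the zero section; its zero locus $Z'$ is a smooth oriented $1$-manifold with $[Z']=\PD(c_1(L))$ by the classical fact quoted at the start of the section. The difference $s-s'$ (well-defined after trivializing over tubular neighborhoods, or intrinsically as a section of $L$) is nonvanishing outside a small neighborhood $N$ of $Z\cup Z'$; I would then compare the cycles $\sum_e\theta(e)\,e$ and $Z'$ by a linking/degree argument component by component. Concretely, on the boundary of a tubular neighborhood of each edge $e$, the section $s$ has winding $\pm\theta(e)$ along the linking circle while $s'$ has winding equal to the (signed) number of points of $Z'$ it links; the relative homology class of $\sum_e\theta(e)e - Z'$ in $M$ is detected by pairing with $H^1$, and each such pairing is a sum of local winding-number differences that can be computed on $\partial N$ where both sections are homotopic through nonvanishing sections (both are homotopic to $s$ restricted there). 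Hence the two cycles are homologous, giving $[Z,\theta,or]=[Z']=\PD(c_1(L))$.

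An alternative and perhaps cleaner route for~\eqref{It_ZisPDc1IfGraph}, which I would actually prefer to write up, is the direct Poincar\'e--Lefschetz computation: for any closed $2$-form $\omega$ representing $c_1(L)$ and any closed surface $\Sigma$ (or, dually, for any class in $H^1(M;\Z)$ represented by a map $f\colon M\to S^1$), compute the intersection number of $\sum_e\theta(e)e$ with $\Sigma$ and match it against $\langle c_1(L),[\Sigma]\rangle$. Pushing $\Sigma$ into general position, $\Sigma\cap Z$ is a finite set of interior edge points, and near each the local contribution of the cycle to the intersection pairing is $\pm\theta(e)$, which by the definition of $\theta$ via the linking winding number of $s$ is exactly the local contribution of $\Sigma$ to the degree of $s|_\Sigma$; summing over $\Sigma\cap Z$ and using that $s$ is nonvanishing on $\Sigma\setminus Z$ identifies this with the total obstruction $\langle e(L),[\Sigma]\rangle = \langle c_1(L),[\Sigma]\rangle$. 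Since this holds for a generating set of $H_1$ (equivalently of $H^2$ via $\PD$), the homology classes agree.

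The step I expect to be the main obstacle is the sign/orientation bookkeeping in part~\eqref{It_FlowOnZeroLocusIfGraph}: one must pin down, once and for all, the convention relating the ambient orientation of $M$, the orientation on the linking circle $\gamma_e$ singled out by~\eqref{Eq_OrientationCondition}, and the induced orientation on $e$, and then check that with this convention an edge oriented ``out of'' $v$ contributes $+\theta(e)$ to the left side of~\eqref{Eq_Flow} while one oriented ``into'' $v$ contributes $-\theta(e)$, so that the vanishing of the total winding of $s$ around the small sphere is precisely the flow equation. Everything else — the transversality of the small sphere, the existence of the linking circles, the extension argument giving vanishing total winding — is standard $C^0$-obstruction theory in dimension three.
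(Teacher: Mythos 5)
Part~\textit{(i)} of your proposal is essentially the paper's argument (the paper uses the boundary $\Sigma$ of a small contractible neighbourhood of the vertex rather than a metric sphere, and concludes from $\sum_j\deg(s\colon\gamma_j\to\C^*)=\deg(L|_\Sigma)=0$), so apart from the sign bookkeeping you yourself flag, that part is fine. The problem is in part~\textit{(ii)}. Both of your routes end by detecting the class $[Z,\theta,or]-[Z']$ (respectively $[Z,\theta,or]-\PD(c_1(L))$) through pairings: in the first route you say the difference ``is detected by pairing with $H^1$'', and in your preferred route you verify $\langle c_1(L),[\Sigma]\rangle$ against intersection numbers with closed surfaces and conclude that ``the homology classes agree''. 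This step fails in general: for a closed oriented three-manifold the pairing of $H_1(M;\Z)$ with $H^1(M;\Z)$ (equivalently, the intersection pairing with $H_2(M;\Z)$) annihilates the torsion subgroup, so your argument only proves the equality in $H_1(M;\Z)/\mathrm{torsion}$, not the asserted identity in $H_1(M;\Z)$. This is not a pedantic point in the present context: the case where $c_1(L)$ is a nonzero torsion class (e.g.\ on a rational homology sphere) is exactly one of the situations the theorem is meant to cover, and there your argument yields no information at all.

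The missing idea is a chain-level comparison rather than a pairing argument. The paper perturbs $s$ to a transverse smooth section $s_1$ that is $C^0$-close to $s$, so that $s_1^{-1}(0)$ lies in an arbitrarily small neighbourhood of $Z$; it then collapses small disjoint balls around the vertices by a map homotopic to $\mathrm{id}_M$, after which $s_1^{-1}(0)$ becomes a graph consisting, over each edge $e$, of parallel copies of $e$ whose signed count is $\theta(e)$ (this is exactly your local winding-number matching along the tube around $e$). This exhibits $\sum_e\theta(e)e$ and $s_1^{-1}(0)$ as homologous integral $1$-cycles in $M$ directly, with no loss of torsion information, and it also disposes of the vertex regions, which your first route leaves untreated. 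Your first route could be repaired along the same lines by showing that $\sum_e\theta(e)e - Z'$ bounds inside a small neighbourhood of $Z$ (rather than merely pairs to zero with $H^1(M;\Z)$), but as written neither of your arguments establishes the integral statement.
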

\begin{proof}
	Pick a vertex $v\in Z$, an open contractible neighborhood  $U$ of $v$ such that $\Sigma:=\partial U$ is a smoothly embedded surface in $M$, and a trivialization of $L$ over a neighborhood of the closure $\bar U$ (shrink $U$ if necessary).
	Without loss of generality we can also assume that $\Sigma$ intersects each edge at most at one point and that this intersection is transverse. 
	In particular, $\Sigma\cap Z$ consists of finitely many points, say $m_1, \dots, m_k$.  
	For each $m_j$ choose a small embedded disc $D_j\subset \Sigma$ containing $m_j$; Clearly, these discs can be chosen so that there closures are disjoint. 
	Denote also $\gamma_j:=\partial D_j$.
	Notice that $\Sigma$ is oriented as the boundary of $U$; This in turn induces an orientation of each $D_j$ and, hence, also of $\gamma_j$.
	
	With these preliminaries at hand we have 
	\begin{equation}
	   \label{Eq_AuxSumOfLocDegrees}
	\sum_{j=1}^k \deg \bigl ( s\colon \gamma_j\to\C^* \bigr) = \deg (L|_{\Sigma})= 0, 
	\end{equation}
	where the last equality holds by the triviality of $L$ over $\Sigma$.
	
	Furthermore, notice that if an edge $e$ begins at $v$ and intersects $\Sigma$ at some $m_j$ we have $\theta(e)=+\deg \bigl ( s\colon \gamma_j\to\C^* \bigr)$, whereas if $e$ ends at $v$ we have $\theta(e)=-\deg \bigl ( s\colon \gamma_j\to\C^* \bigr)$. 
	Hence, \eqref{Eq_AuxSumOfLocDegrees}  shows that  $(\theta, or)$ is a flow, thus proving~\textit{\ref{It_FlowOnZeroLocusIfGraph}}.

	To see~\textit{\ref{It_ZisPDc1IfGraph}}, let $s_1\in C^\infty(M;L)$ be a perturbation of $s$ intersecting the zero section transversely.
	In particular, $s_1^{-1}(0)$ is a smooth closed oriented curve representing $\PD (c_1(L))$.  
	This is schematically shown on  Figure~\ref{Fig_GraphAndPerturb}.
	Notice that each ``blue'' connected component must be equipped with weight $1$.

	\begin{figure}[ht]
		\centering
		\begin{minipage}{.47\textwidth}
			\centering
			\raisebox{0\height}{\includegraphics[width=\linewidth]{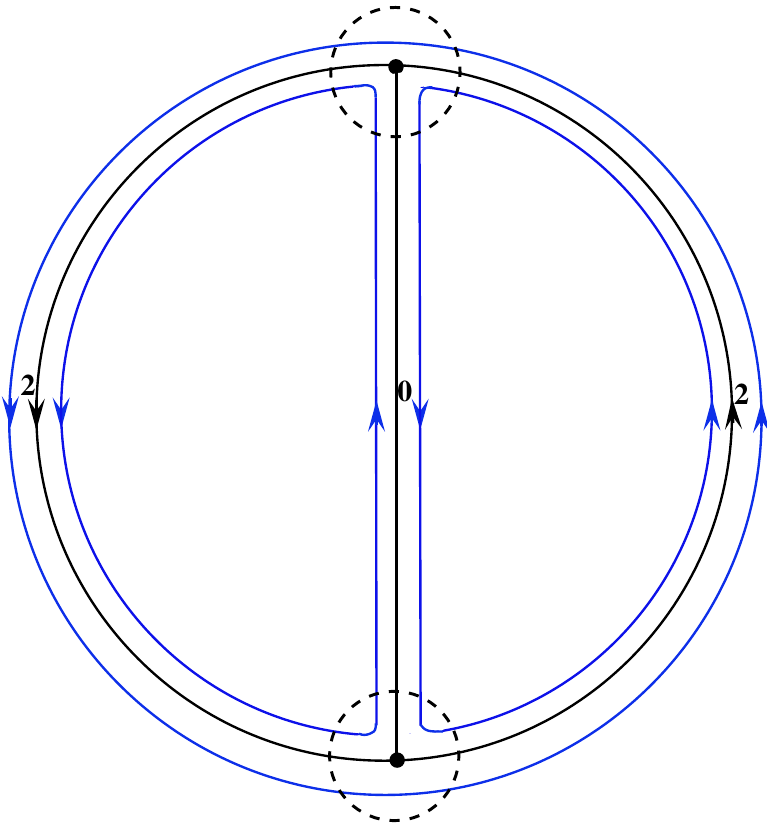}}
			\captionof{figure}{A graph (in black) and its perturbation (in blue); Numbers near black edges represent their weights; Dashed lines represent boundaries of balls to be collapsed.}
			\label{Fig_GraphAndPerturb}
		\end{minipage}%
		\hfill
		\begin{minipage}{.47\textwidth}
			\centering
			\raisebox{0.08\height}{\includegraphics[width=\linewidth]{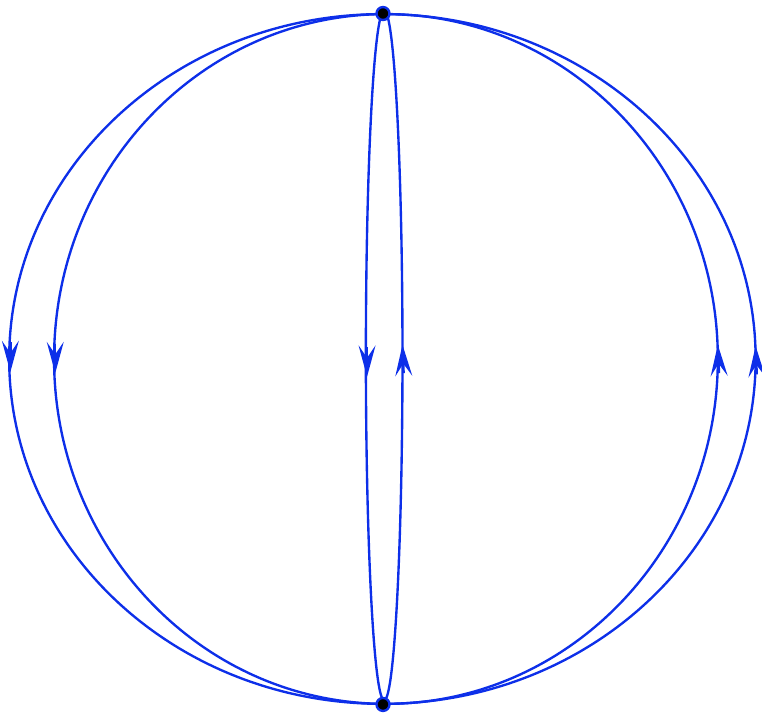}}
			\captionof{figure}{The graph obtained by a perturbation and collapse of balls. \\ }
			\label{Fig_ResultingGraph}
		\end{minipage}
	\end{figure}

	Without loss of generality we can assume that $s_1^{-1}(0)$ is contained in an arbitrarily small neighborhood of $Z$.
	By ``collapsing'' suitable disjoint balls centered  at the vertices of $Z$, we obtain a new graph $G$ shown on Figure~\ref{Fig_ResultingGraph}. 
	The graph $G$ can be though of as being obtained from $Z$ by replacing each edge $e$ by a number of ``parallel'' edges all equipped with an orientation and weighted by $1$.
	
	For any $e\in E(Z)$ denote by $E_e(G)$ the set of all edges in $G$ connecting the same vertices as $e$.
	We have
	\[  
	[Z]=\Bigl [\sum\limits_{e \in E(Z)} \theta(e)e\Bigr ]=
	\Bigl [\sum\limits_{e \in E(Z)}\sum\limits_{e' \in E_e(G)}e' \Bigr ]=
	\Bigl [\sum\limits_{e' \in E(G)}e' \Bigr ]=
	[s_1^{-1}(0)]=
	\PD(c_1(L)).
	\]
This finishes the proof of this proposition.
\end{proof}

\begin{remark}
	\label{Rem_FlowsViaCohom}
	Let $Z\subset M$ be an embedded graph. 
	Choose a neighborhood $V$ of $Z$ such that $V$ retracts on $Z$ and the boundary $\partial \overline V$ is a smoothly embedded surface. 
	One can think of $V$ as a thickening of $Z$.
	The long exact sequence of the pair $(\overline V, \partial \overline V)$ yields
	\[
	H_1(Z)\cong H_1(\partial \overline V)/ H_2(\overline V, \partial \overline V)\cong H^1(\partial \overline V)/H^1(\overline V).
	\]
	Since $\Flow(Z)\cong H_1(Z)$, the rightmost space is isomorphic to $\Flow(Z)$, which is intuitively clear, since its elements can be seen as ``assigning weights'' to elements of $\Im \bigl( H_2(\overline V, \partial \overline V)\to H_1(\partial \overline V)\bigr )$. 
\end{remark}

\subsection{The case of a graph-like set}

Let $M$ be a closed oriented three-manifold. 
It is convenient to fix a Riemannian metric $g$ on $M$ and a positive number $r_0$ which is smaller than the injectivity radius of $g$.
For any point $m\in M$ denote by $\exp_m\colon T_mM\to M$ the exponential map and $\exp_{m,\l} (\rv):=\exp_m(\l\rv)$, where $\l>0$.
These maps are defined on the balls centered at the origin and of radius $r_0$ and $\l^{-1}r_0$ respectively.  
If the point $m$ is clear from the context we will write simply $\exp$ and $\exp_\l$ respectively. 
Denote also by $d\colon M\times M\to \R_{\ge 0}$ the distance function corresponding to $g$.

Let $Z$ be a closed subset of $M$ and $\cZ_*$ be a dilation-invariant subset of $T_zM$, where $z\in Z$.
Following \cite{Taubes14_ZeroLoci_Arx}, we say that $\cZ_*$ is a rescaling limit of $Z$ at $z$, if there is a sequence of positive numbers $\l_i\to 0$  with the following property: For any $\e>0$ there is $I_\e>0$ such that for all  $i\ge I_\e$ we have:
\begin{enumerate}[(a)]
	\item \label{It_RescalinLimitA} $\exp_{\l_i}^{-1}(Z)\cap B_{\e^{-1}}(0)\subset U_\e(\cZ_*)$.
\end{enumerate}
Here $B_{\e^{-1}}(0)$ is the ball of radius $\e^{-1}$ centered at the origin and $U_\e(\cZ_*)$ is the $\e$-neighborhood of $\cZ_*$.

\begin{defn}
	A set $Z$ is said to be \emph{locally graph-like}, if the following holds:
	\begin{enumerate}[(i)]
		\item At each $z\in Z$ there is a rescaling limit, which is a cone consisting of finitely many rays;
		\item At all but at most countably many points of $Z$ there is a rescaling limit, which is a line.
	\end{enumerate}
\end{defn}

In what follows for a locally graph-like set we consider only those rescaling limits, which are cones consisting of finitely many rays.

Let $\Sigma\subset M$ be a compact embedded oriented surface.
We say that $\Sigma$ intersects $Z$ transversally, if at each point $z\in Z\cap\Sigma$ there is a rescaling limit $\cZ_*$, which is a line, such that $T_z\Sigma$ and $\cZ_*$ are transverse.
If $\cZ_*$ is equipped with a weight $\theta_*$ and an orientation, we define $I(\cZ_*, T_z\Sigma):=\epsilon\cdot \theta_*\in \Z$, where $\epsilon$ is $+1$ or $-1$ depending on the orientation of $\cZ_*\oplus T_z\Sigma$.

\begin{defn}
	\label{Defn_FlowGeneral}
	A flow $(\theta, or)$ on a locally graph-like set $Z$ is a collection of flows on each rescaling limit $\cZ_{z*}$ at each point $z\in Z$ such that the following holds: Let $\Sigma$ be any compact embedded oriented surface contained in an open contractible subset of $M$ and intersecting $Z$ transversally at each point. 
	Then there is $\delta>0$ such that for each finite covering of $\Sigma\cap Z\subset\Sigma$ by \emph{disjoint} discs $D_r(z_j)\subset \Sigma$ with $r\le\delta$ and $z_j\in Z\cap \Sigma$ we have
	\[
	\sum_{j} I(\cZ_{z_j*}, T_{z_j}\Sigma)=0
	\] 
	for any choice of rescaling limits $\cZ_{z_j*}$, which is transverse to $T_{z_j}\Sigma$ at  any $z_j\in Z\cap\Sigma$.
\end{defn}

\begin{remark}
If in the setting of the previous definition $\Sigma$ intersects $Z$ at a finite number of points, the condition is that the ``intersection number'' vanishes, i.e.
\begin{equation*}
\sum_{z\in Z\cap\Sigma} I(\cZ_{z*}, T_z\Sigma)=0.
\end{equation*}
However, I do not wish to exclude a priori the case when the intersection contains infinitely many points. 
\end{remark}

Let $Z$ be a locally graph-like set equipped with a flow.
We say that $z$ is a regular point of $Z$, if there is a neighborhood $U\subset M$ of $z$ such that $Z\cap U$ is a $C^1$-embedded submanifold of $U$.
Let $Z_{\mathrm{reg}}$ denote the set of all regular points.
Clearly, at a regular point we have $\cZ_*=T_zZ_{\mathrm{reg}}$, which is equipped with a unique weight $\theta_*$ and an orientation, provided $\theta_*\neq 0$. 
In other words, over $Z_{\mathrm{reg}}$ we can regard $\theta$ as a locally constant function with values in $\Z_{\ge 0}$, i.e., $\theta$ attaches an integer multiplicity to each connected component of $Z_{\mathrm{reg}}$. 
Moreover, each connected component with $\theta\neq 0$ is equipped with an orientation. 
With this in mind it is easy to see that in the case  $Z$ is an embedded graph, the above definition yields a flow in the sense of Definition~\ref{Defn_FlowOnGraph}. 

Clearly, the set $\Flow(Z)$ of all flows on a given $Z$ has a natural structure of an abelian group.  
I show below that there is a natural homomorphism $\Flow(Z)\to H_1(M;\Z)$.  
However, before doing this let me construct some examples.

\medskip

Assume that a locally graph-like set $Z$ is the zero locus of a continuous section of a complex line bundle $L\to M$.
Let $s_*\colon T_zM\to \C^*$ be a continuous map such that $s_*^{-1}(0)=\cZ_*$. 
\begin{defn}
A pair $(\cZ_*, s_*)$ is said to be a rescaling limit of $(Z, s)$ at $z\in Z$, if there is a sequence of positive numbers $\l_i\to 0$  with the following property: For any $\e>0$ there is $I_\e>0$ such that for all  $i\ge I_\e$ in addition to~\ref{It_RescalinLimitA} we have:
\begin{enumerate}[(b)]
	\item\label{It_RescalingLimitII} There is a sequence of trivializations of $\exp_{\l_i}^*L$ such that the $C^0$-norm of $\exp_{\l_i}^*s - s_*$ over $B_{\e^{-1}}(0)\setminus U_\e(\cZ_*)$ is less than $\e$.
	\end{enumerate}  	
\end{defn}


Since $\cZ_*$ is a cone consisting of finitely many rays, by~\autoref{Prop_ZeroLocusEmbeddedGraph},~\textit{\ref{It_FlowOnZeroLocusIfGraph}} (applied to $s_*$ in place of $s$) we obtain an infinitezimal flow $(\theta_*, or)$,  i.e., a flow  on $\cZ_*$.  
The collection of all these infinitesimal flows is abbreviated as $(\theta, or)$ and we say that $(\theta, or)$ is induced by $s$.

\begin{lem}
	\label{Lem_SectionDeterminesFlow}
	Let $s$ be a continuous section of a complex line bundle $L$. 
	Assume the zero locus $Z=s^{-1}(0)$ is locally graph-like and at each point $z\in Z$ there is a rescaling limit $(\cZ_*, s_*)$ of $(Z, s)$. 
	The collection $(\theta, or)$ induced by $s$  is a flow on $Z$.
\end{lem}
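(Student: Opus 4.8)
The plan is to verify the flow condition of Definition~\ref{Defn_FlowGeneral} directly, reducing it to the embedded-graph case via the curvature-style estimate that a continuous section of a \emph{trivial} line bundle over a surface has total local degree zero. First I would fix a compact embedded oriented surface $\Sigma$ contained in an open contractible set $U\subset M$, intersecting $Z$ transversally at each point, together with a trivialization of $L$ over a neighborhood of $\Sigma$; since $\Sigma$ is null-homotopic in $U$ such a trivialization exists. The key point I want to extract is: for any finite family of disjoint discs $D_r(z_j)\subset\Sigma$ with $z_j\in Z\cap\Sigma$ whose union contains $Z\cap\Sigma$, the restriction $s|_{\Sigma\setminus\bigcup \mathrm{int}\,D_r(z_j)}$ is nowhere vanishing, so $s$ defines a map into $\C^*$ there, and hence $\sum_j \deg(s\colon \partial D_r(z_j)\to\C^*)=0$ because this sum equals the degree of $s|_\Sigma$ relative to the trivialization, which is zero by triviality (this is exactly the computation~\eqref{Eq_AuxSumOfLocDegrees} from the proof of~\autoref{Prop_ZeroLocusEmbeddedGraph}, now used on $\Sigma$ rather than on a sphere around a vertex). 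The discs here are the $\gamma_j$; I orient each $\partial D_r(z_j)$ as the boundary of $D_r(z_j)$ with its orientation from $\Sigma$.

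The second step is to identify $\deg(s\colon\partial D_r(z_j)\to\C^*)$ with $I(\cZ_{z_j*},T_{z_j}\Sigma)$ for $r$ small, using the rescaling limit $(\cZ_{z_j*},s_{z_j*})$. Working in the chart $\exp_{z_j}$ and rescaling by $\l_i$, condition~\ref{It_RescalingLimitII} says that after a suitable trivialization $\exp_{\l_i}^*s$ is $C^0$-close to $s_{z_j*}$ on $B_{\e^{-1}}(0)\setminus U_\e(\cZ_{z_j*})$, while condition~\ref{It_RescalinLimitA} confines the rescaled zero set of $s$ to $U_\e(\cZ_{z_j*})$. Choosing $\e$ small and then $i$ large, a circle of fixed radius in the rescaled picture — which corresponds to $\partial D_r(z_j)$ for $r=\l_i\cdot(\text{fixed})$ — avoids $U_\e(\cZ_{z_j*})$ except near the (finitely many) rays, and on it $\exp_{\l_i}^*s$ is homotopic in $\C^*$ to $s_{z_j*}$. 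Hence $\deg(s\colon\partial D_r(z_j)\to\C^*)=\deg(s_{z_j*}\colon \partial D\to\C^*)$ for that circle, and since $\cZ_{z_j*}$ is transverse to the plane $T_{z_j}\Sigma$ and $s_{z_j*}^{-1}(0)=\cZ_{z_j*}$, a further application of the embedded-graph formula~\eqref{Eq_WeightForGraphs}–\eqref{Eq_OrientationCondition} (the ``vertex'' being the origin, the rays being the edges) shows this degree equals, up to the sign $\epsilon$ recording the orientation of $\cZ_{z_j*}\oplus T_{z_j}\Sigma$, the weight $\theta_{z_j*}$ of the unique ray of $\cZ_{z_j*}$ crossing $T_{z_j}\Sigma$; that is, it equals $I(\cZ_{z_j*},T_{z_j}\Sigma)$. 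Combining the two steps gives $\sum_j I(\cZ_{z_j*},T_{z_j}\Sigma)=\sum_j\deg(s\colon\partial D_r(z_j)\to\C^*)=0$, which is the required identity; and the choice of rescaling limits $\cZ_{z_j*}$ at each point was arbitrary throughout, so the conclusion holds for any such choice, with $\delta$ provided by the smallness of $r$ needed in the second step (uniform over the finitely many intersection points — or, if the intersection is infinite, one argues by a compactness/covering argument on $\Sigma\cap Z$, shrinking $\delta$).

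It remains to check that the collection $(\theta_{z*},or)$ assigned at each point $z$ is actually a \emph{flow on each cone} $\cZ_{z*}$ in the sense of Definition~\ref{Defn_FlowOnGraph}, but this is precisely~\autoref{Prop_ZeroLocusEmbeddedGraph}~\textit{\ref{It_FlowOnZeroLocusIfGraph}} applied to the continuous map $s_{z*}\colon T_zM\to\C$ whose zero locus $\cZ_{z*}$ is a cone of finitely many rays (a cone is a very special embedded graph, with a single vertex at the origin), so no new work is needed; this was already observed in the paragraph preceding the lemma.

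The main obstacle I expect is the second step: making the passage from the abstract rescaling-limit conditions~\ref{It_RescalinLimitA}–\ref{It_RescalingLimitII}, which only control $s$ on \emph{fixed-radius} balls in the rescaled chart with errors going to zero, to a statement about $\partial D_r(z_j)$ at a genuinely small but fixed $r$ in $M$. One has to interleave the quantifiers correctly — first pick the homotopy accuracy $\e$ (depending only on $s_{z_j*}$, e.g.\ smaller than $\inf_{\partial D}|s_{z_j*}|$ for the chosen circle $\partial D$), then pick $i$ large so that $\l_i$ is small and the $C^0$-estimate holds, which fixes $r=r(i)$; one must also know the answer is independent of which admissible circle is used, i.e.\ that the local degree is locally constant in $r$ for $r$ in the relevant small range, which follows because $Z\cap\Sigma$ near $z_j$ is contained in a thin neighborhood of the rays by~\ref{It_RescalinLimitA} and $s$ is nonvanishing off it. A secondary subtlety is handling the case of infinitely many intersection points in $Z\cap\Sigma$, where one cannot simply take a minimum of finitely many $\delta$'s; here $Z\cap\Sigma$ is compact, each point has a neighborhood in $\Sigma$ on which the above local analysis applies with some radius, and a finite subcover yields a uniform $\delta$, while the degree contributions from points not meeting the chosen discs vanish because $s$ is nonvanishing there.
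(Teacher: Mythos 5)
Your proposal is correct and follows essentially the same route as the paper's proof: trivialize $L$ over $\Sigma$, observe that the sum of boundary degrees over disjoint discs covering $Z\cap\Sigma$ vanishes because $L$ is trivial over $\Sigma$, and identify each $\deg\bigl(s\colon\partial D_r(z_j)\to\C^*\bigr)$ with $\epsilon(z_j)\theta_*(z_j)=I(\cZ_{z_j*},T_{z_j}\Sigma)$ for small $r$ via the rescaling limit. The paper compresses this second identification into a single sentence ("if $r$ is sufficiently small, we can assume $\theta_*(z_j)=|\deg(\dots)|$"), so your quantifier bookkeeping and covering argument merely spell out details it leaves implicit.
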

\begin{proof}
	Let $\Sigma\subset M$ be a compact embedded oriented surface in $M$ contained in an open contractible set. 
	Trivialize $L$ over $\Sigma$ so that $s$ can be thought of as a map $\Sigma\to\C$. 
	Cover $\Sigma\cap Z$ by a finite collection of disjoints discs $D_r(z_j)$. 
	If $r$ is sufficiently small, we can assume $\theta_*(z_j) = |\deg (s\colon \partial D_r(z_j)\to \C^*)|$. 
	Hence, 
	\[
	\sum_{j} \epsilon(z_j)\theta_*(z_j)=
	\sum_{j} \deg \bigl ( s\colon\partial D(z_j)\to \C^*  \bigr)=0.\qedhere
	\]
\end{proof}

\medskip

The rest of this subsection is devoted to the proof of the following result.

\begin{proposition}
	\label{Prop_NaturalHomPD}
	Let $Z$ be a compact locally graph-like set. 
	\begin{enumerate}[(i)]
		\item There is a natural homomorphism $\Gamma\colon\Flow(Z)\to H_1(M;\Z), \ (\theta, or)\mapsto [Z, \theta, or]$;
		\item 
		\label{It_PD}
		 If $(\theta, or)$ is induced by a section of a line bundle $L$, then $[Z,\theta, or]=\PD(c_1(L))$.
	\end{enumerate}
\end{proposition}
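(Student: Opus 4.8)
The plan is to reduce the general graph-like case to the embedded-graph case already treated in Proposition \ref{Prop_ZeroLocusEmbeddedGraph}, using a triangulation of $M$ fine enough to ``see'' the local model at every point of $Z$. First I would fix a smooth triangulation $\mathcal{T}$ of $M$ with the property that every closed simplex meeting $Z$ is contained in a contractible chart over which $L$ trivializes, and — crucially — that $Z$ meets the $2$-skeleton $\mathcal{T}^{(2)}$ only in the interiors of $2$-simplices, transversally in the sense of the definition preceding Definition \ref{Defn_FlowGeneral}. Such a $\mathcal{T}$ exists because the non-line rescaling limits occur at only countably many points of $Z$, and these can be avoided by a generic perturbation of the triangulation; at every point of $Z\cap\mathcal{T}^{(2)}$ a rescaling limit is then a line transverse to the relevant face. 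For each $2$-simplex $\sigma$, the set $Z\cap\sigma$ is compact, so by Definition \ref{Defn_FlowGeneral} the ``intersection number'' $\sum_{z\in Z\cap\sigma} I(\cZ_{z*},T_z\sigma)$ is a well-defined integer $n(\sigma)$ (finite covering by small disjoint discs, each disc contributing $\pm\theta_*$ by the flow condition, the total being independent of the covering). Orienting the $2$-simplices consistently, the assignment $\sigma\mapsto n(\sigma)$ is a simplicial $2$-cochain, hence by Poincaré–Lefschetz duality a simplicial $1$-chain $c$; I would then define $\Gamma(\theta,or):=[c]\in H_1(M;\Z)$.

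The key verifications are that $c$ is a cycle, that $[c]$ is independent of the choices, and that the map is a homomorphism. For the cycle property: the boundary $\partial c$ is computed from the coboundary $\delta n$, which evaluates on a $3$-simplex $\Delta$ as $\sum_{\sigma\subset\partial\Delta} \pm n(\sigma)$; but $\partial\Delta$ is a $2$-sphere contained in a contractible chart, $Z\cap\Delta$ is contained in finitely many rays-like pieces, and the flow condition applied to the sphere $\partial\Delta$ (covered by small discs around the finitely many points $Z\cap\partial\Delta$) forces this sum to vanish, exactly as in \eqref{Eq_AuxSumOfLocDegrees}. Independence of the triangulation follows by the usual subdivision argument: a common refinement of two admissible triangulations produces a $2$-chain homology between the corresponding $1$-cycles, the coefficients matching because $n(\sigma)=\sum_{\sigma'\subset\sigma} n(\sigma')$ over subsimplices (again an instance of the flow condition on the boundary circles). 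That the abelian-group structure on $\Flow(Z)$ is respected is immediate, since $n(\sigma)$ is additive in $(\theta,or)$ when everything is expressed via a fixed reference orientation, exactly as in the graph case.

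For part \ref{It_PD}, suppose $(\theta,or)$ is induced by a section $s$ of $L$. Over each admissible $2$-simplex $\sigma$, trivialize $L$ and regard $s$ as a map $\sigma\to\C$; by the rescaling-limit hypothesis \ref{It_RescalingLimitII} and Proposition \ref{Prop_ZeroLocusEmbeddedGraph}\,\ref{It_FlowOnZeroLocusIfGraph} applied to each $s_*$, for $z\in Z\cap\sigma$ the local weight and orientation are exactly those of $\deg(s\colon\partial D_r(z)\to\C^*)$ for small $r$, so $n(\sigma)=\deg(s|_{\partial\sigma}\colon\partial\sigma\to\C^*)$ — the sum of local degrees over the finitely many zeros of $s$ inside $\sigma$. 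This is precisely the value on $\sigma$ of the relative Euler cocycle of $(L,s)$, whose Poincaré dual is $c_1(L)$; hence $[c]=\PD(c_1(L))$. Alternatively, and perhaps more cleanly, I would perturb $s$ rel $\mathcal{T}^{(2)}$ to a smooth section $s_1$ transverse to the zero section, note that the perturbation can be taken $C^0$-small and supported away from $\mathcal{T}^{(2)}$ so that $n(\sigma)$ is unchanged and $s_1^{-1}(0)$ is a smooth closed oriented curve meeting each $2$-simplex in $n(\sigma)$ signed points, and then run the collapsing argument of the proof of Proposition \ref{Prop_ZeroLocusEmbeddedGraph}\,\ref{It_ZisPDc1IfGraph} to identify $[s_1^{-1}(0)]=[c]$ with $\PD(c_1(L))$.

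The main obstacle I anticipate is the construction of the admissible triangulation together with the transversality bookkeeping: one must genuinely use that only countably many points of $Z$ have non-line rescaling limits, and that near a line rescaling limit the set $Z$ is trapped in a thin neighborhood of a line (from condition \ref{It_RescalinLimitA}) so that a face transverse to that line meets $Z$ in a compact set on which the flow condition of Definition \ref{Defn_FlowGeneral} actually applies. Making the covering-by-discs argument uniform enough to conclude that $n(\sigma)$ is a well-defined \emph{finite} integer, rather than merely a formal expression, is where the hypotheses of ``locally graph-like'' and the existence of rescaling limits are really consumed.
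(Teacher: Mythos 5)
Your overall strategy (triangulate $M$, read off an intersection-number $2$-cochain $n(\sigma)$ from the flow, dualize to a $1$-cycle, and identify it with the obstruction cocycle of $(L,s)$ for part~\textit{\ref{It_PD}}) is genuinely different from the paper's proof, which instead collapses $Z$ onto an embedded graph by a map $f\simeq id_M$ (\autoref{Lem_ContractingMap}), pushes the flow forward, and quotes the graph case. However, as written your argument has two real gaps. The first is the existence of your ``admissible'' triangulation. Genericity only lets you move the $2$-skeleton off the \emph{countably many} points of $Z$ with no line rescaling limit; it does not address tangencies, i.e.\ points $z\in Z\cap\mathcal{T}^{(2)}$ at which every line rescaling limit lies in the tangent plane of the face. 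Since $Z$ is merely a compact locally graph-like set -- not rectifiable, not a union of $C^1$ curves -- there is no Sard-type theorem available to rule out that, for every admissible position of a face, the face meets $Z$ at some such tangential point, and your construction of $n(\sigma)$ (and any appeal to Definition~\ref{Defn_FlowGeneral}) breaks down at exactly those points. This is precisely the difficulty the paper's proof is engineered to avoid: in \autoref{Lem_ContractingMap} the set $Z$ is first straightened near finitely many points into genuine smooth curves (Step~1), the spheres $\partial B$ are chosen by a countability argument so that they meet only points with line limits, and classical transversality for the \emph{smoothed} curve is then achieved by varying the radius; only after that is the flow condition invoked.

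The second gap is the well-definedness of $n(\sigma)$ itself. Definition~\ref{Defn_FlowGeneral} constrains sums over \emph{closed} surfaces (the sum over a fine disjoint-disc covering vanishes); it does not by itself say that for a $2$-simplex with boundary the covering-based sum is independent of the chosen covering, nor does the literal expression $\sum_{z\in Z\cap\sigma}I(\cZ_{z*},T_z\sigma)$ make sense when $Z\cap\sigma$ is infinite, a case the paper explicitly refuses to exclude. You flag this yourself as the crux, but the parenthetical ``the total being independent of the covering'' is an assertion, not a proof. It can probably be repaired -- e.g.\ by arranging $Z\cap\mathcal{T}^{(2)}$ to lie in the interiors of the $2$-faces, smoothing the corners of $\partial\Delta$ away from $Z$, and comparing two coverings of $\sigma\cap Z$ by completing both with one fixed covering of the remaining faces of $\partial\Delta$ and applying the closed-surface axiom twice -- but this argument must actually be carried out, and it again presupposes the transversality of the whole of $\partial\Delta$ with $Z$ discussed above. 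Until these two points are settled, the $2$-cochain $\sigma\mapsto n(\sigma)$, and hence your class $[c]$, is not defined; by contrast, the paper's definition $\Gamma(Z,\theta,or)=[G_f,f_*(\theta,or)]$ only ever evaluates flows on honest embedded graphs, with well-definedness checked via the thickening $V$ of $Z$ and the identification $H_1(G)\cong H^1(\partial\overline V)/H^1(\overline V)$.
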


It is convenient to prove an auxiliary lemma first. 
The proof of~\autoref{Prop_NaturalHomPD} is given at the end of this subsection.

\begin{lem}
	\label{Lem_ContractingMap}
	Let $Z$ be a compact locally graph-like set. 
	Then there is a smooth map $f\colon M\to M$ homotopic to ${id_M}$ such that $G:=f(Z)$ is an embedded graph. 
	Moreover, the following holds:
	\begin{enumerate}[(i)]
		\item There is a natural homomorphism $f_*\colon\Flow(Z)\to \Flow (G)$;
		\item If $Z$ is a zero locus of some continuous section $s$ of a complex line bundle $L$, then $G$ is also the zero locus of some $\hat s\in C^0(M; L)$;
		\item
		\label{It_FlowsInducedBySections}
		 If $(\theta, or)$ is a flow induced by a section of $L$, then $f_*(\theta, or)$ is a flow induced by a section of $L$;
		\item\label{It_SmallCollapse} Given $\delta>0$, the map $f$ can be chosen so that for all $m\in M$ we have $d\bigl( m, f(m) \bigr)<\delta$.
		Besides, for any neighborhood $V$ of $Z$ the map $f$ can be chosen to be identity on $M\setminus V$.
	\end{enumerate} 
\end{lem}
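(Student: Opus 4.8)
The plan is to construct the map $f$ by composing finitely many "local collapses", one near each point where $Z$ fails to be an embedded graph, and then reading off the four conclusions from this construction. First I would use compactness of $Z$ together with the locally graph-like hypothesis: at each $z\in Z$ a rescaling limit $\cZ_{z*}$ is a finite union of rays, and moreover on the complement of a countable set $Z$ admits a rescaling limit which is a line. I would like to argue that there is a finite set $V=\{z_1,\dots,z_N\}\subset Z$ and disjoint small geodesic balls $B_i = B_{\rho_i}(z_i)$ such that $Z\setminus \bigcup_i B_i$ is already a disjoint union of $C^1$-embedded arcs, i.e.\ on the complement of these balls $Z$ looks like the edges of a graph; the singular/branching behaviour of $Z$ is confined to the balls $B_i$. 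This is essentially the content of the structural results of \cite{Taubes14_ZeroLoci_Arx} that are being invoked throughout, so I would cite those for the existence of such a "good cover" and for the fact that only finitely many balls are needed. Shrinking the $\rho_i$ if necessary I would also arrange that $\partial B_i$ meets $Z$ transversally (in the rescaling-limit sense of Definition~\ref{Defn_FlowGeneral}) at finitely many points, and that all the $B_i$ together are contained in an arbitrarily prescribed neighborhood $V$ of $Z$, which takes care of the second half of \textit{\ref{It_SmallCollapse}}.

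Next I would define, for each $i$, a smooth map $f_i\colon M\to M$ supported in $B_i$ which is the identity outside $B_i$ and which radially collapses a slightly smaller concentric ball $B_{\rho_i/2}(z_i)$ to the point $z_i$, in normal coordinates $x\mapsto z_i$ for $|x|\le \rho_i/2$ and $x\mapsto \exp_{z_i}\bigl(\lambda(|x|)\tfrac{x}{|x|}\bigr)$ for $\rho_i/2\le |x|\le \rho_i$ with a suitable monotone $\lambda$; each $f_i$ is smooth, properly homotopic to $\mathrm{id}_M$ through maps supported in $B_i$, and displaces points by at most $\rho_i$. Setting $f=f_N\circ\cdots\circ f_1$ gives a smooth map homotopic to $\mathrm{id}_M$, equal to the identity off $\bigcup B_i\subset V$, with $d(m,f(m))<\max_i\rho_i<\delta$ once the $\rho_i$ are taken small enough — this gives the first half of \textit{\ref{It_SmallCollapse}}. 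The image $G=f(Z)$ consists of the arcs of $Z$ outside the balls, each $\partial B_i\cap Z$ collapsed to the single vertex $z_i$; so $G$ is an embedded graph whose vertices are the $z_i$ (plus any original endpoints of the arcs on $\partial V$, if $Z$ is not contained in the interior), and whose edges are the components of $Z\setminus\bigcup B_i$, each a $C^1$ arc, hence a smooth embedded $1$-submanifold possibly with boundary after a $C^1$-small smoothing. That smoothing can be absorbed into $f$.

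For part (i), the homomorphism $f_*\colon\Flow(Z)\to\Flow(G)$: given a flow $(\theta,or)$ on $Z$, each edge $e$ of $G$ is (the $f$-image of) an arc of $Z$ lying in $Z_{\mathrm{reg}}$, on which $\theta$ is a locally constant $\Z_{\ge0}$-valued function carrying an orientation when nonzero; declare $(f_*\theta)(e)$ and its orientation to be those values. I must check the flow condition \eqref{Eq_Flow} at each vertex $z_i$ of $G$, and this is exactly where I invoke Definition~\ref{Defn_FlowGeneral} applied to $\Sigma=\partial B_i$ (an embedded oriented surface inside the contractible set $B_i$, intersecting $Z$ transversally): the vanishing of $\sum_{z_j\in Z\cap\partial B_i} I(\cZ_{z_j*},T_{z_j}\partial B_i)$ is precisely the statement that the weighted oriented count of edges entering $z_i$ equals the count leaving it. So $(f_*\theta,f_*or)\in\Flow(G)$, and the assignment is visibly additive. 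For (ii), if $Z=s^{-1}(0)$, one wants $\hat s\in C^0(M;L)$ with $\hat s^{-1}(0)=G$; the natural choice is to pull back and glue: away from the balls $B_i$ take $\hat s$ to agree with $s$ under the natural identifications, and inside each $B_i$ interpolate to a continuous section of $L|_{B_i}$ vanishing exactly on the collapsed configuration $f(Z\cap B_i)=$ (edges $\cap B_i)\cup\{z_i\}$. Concretely, since $B_i$ is contractible one trivializes $L$ there and builds $\hat s$ as a continuous $\C$-valued function with the prescribed zero set and matching boundary values; a homotopy/extension argument (the degrees on the $\gamma_j$ match by construction of the flow) shows this can be done. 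For (iii), if in addition $(\theta,or)$ is induced by $s$ in the sense preceding Lemma~\ref{Lem_SectionDeterminesFlow} — i.e.\ it comes from rescaling limits $(\cZ_*,s_*)$ — then one must exhibit rescaling limits $(\cZ'_*,\hat s_*)$ for $(G,\hat s)$ whose induced infinitesimal flows are the $f_*$-images; at a vertex $z_i$ this amounts to choosing the local model $\hat s_*$ on $T_{z_i}M$ to be a product-type map whose zero cone is the union of the half-lines tangent to the edges at $z_i$ with the correct local degrees, and at interior points of edges it is inherited directly from that of $s$ at the corresponding point of $Z$. I expect the main obstacle to be part (ii)–(iii): controlling the section $\hat s$ inside the collapsing balls so that it is genuinely continuous across $\partial B_i$ and has zero set \emph{exactly} $G$ (not larger), while simultaneously carrying the correct infinitesimal data — this is a gluing/extension problem for sections of $L$ whose solvability hinges on the degree-matching already encoded in the flow condition, and writing it down carefully (in particular arguing that no spurious zeros appear) is the delicate point; the construction of $f$ itself and the flow-condition bookkeeping in (i) are routine by comparison.
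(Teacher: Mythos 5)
Your construction would work if your ``good cover'' claim were true, but that claim is the missing idea and it does not follow from the hypotheses. The lemma is about an arbitrary compact locally graph-like set $Z$: the definition only gives (a) at every point \emph{some} rescaling limit that is a finite union of rays, and (b) at all but \emph{countably many} points some rescaling limit that is a line. This does not yield a finite set of ``bad'' points with disjoint balls outside of which $Z$ is literally a disjoint union of $C^1$-embedded arcs. First, the exceptional set may be countably infinite and may accumulate, so confining it to finitely many balls is not possible in general. Second, even at a point admitting a line as a rescaling limit, condition~\ref{It_RescalinLimitA} is only a one-sided containment $\exp_{\l_i}^{-1}(Z)\cap B_{\e^{-1}}(0)\subset U_\e(\cZ_*)$ along one sequence of scales; it does not make $Z$ a $C^1$ curve near that point --- $Z$ could be disconnected or otherwise wild inside a thin cone around the line. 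Citing \cite{Taubes14_ZeroLoci_Arx} does not fill this hole: the lemma is purely about locally graph-like sets (no spinor is present), and even for blow-up sets those results give an open dense regular part, not finiteness of the singular set. Since all four conclusions in your argument (the identification of edges with arcs of $Z_{\mathrm{reg}}$, the locally constant $\theta$ on edges, the transversality of $\partial B_i\cap Z$, the gluing of $\hat s$) are derived from this structural claim, the proposal has a genuine gap at its foundation.

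The paper's proof is organized precisely around these difficulties. Its Step~1 does not assume $Z$ is a curve near a ``line point''; instead it \emph{collapses} a thin cylinder containing $Z$ onto its axis via $f_U(x_1,x_2,x_3)=(x_1,\chi x_2,\chi x_3)$, so that the image, not $Z$ itself, is a smooth curve. Its Step~3 covers the compact set $Z$ by finitely many balls centered at \emph{arbitrary} points of $Z$ (chosen only so that $Z\cap B$ sits in a cone neighborhood and $\partial B\cap Z$ consists of line points, which exists because the exceptional set is countable), applies Step~1 near each $\partial B$ to create smooth transverse intersections, and then radially collapses the interior of each ball (Step~2, essentially your $f_i$); the flow condition at the new vertex is checked, as you propose, via Definition~\ref{Defn_FlowGeneral} applied to $\partial B$. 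Finally, the point you flag as delicate in (ii)--(iii) --- building $\hat s$ with zero set exactly $G$ and the right infinitesimal data --- is sidestepped in the paper: the new section is defined implicitly by $\chi\cdot s=s_U\comp f_U$, using that $f_U$ is a diffeomorphism off the collapsed region and that $\chi$ vanishes exactly where the collapse happens, so no gluing/extension or degree-matching argument is needed.
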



\begin{proof}
The proof consists of three steps.

\setcounter{step}{0}

\begin{step}\label{Step_LocSmooth}
Assume that at $z\in Z$ there is a rescaling limit $(\cZ_*, s_*)$ such that $\cZ_*$ is a line. 
Then for any neighborhood $U$ of $z$ in $M$ there is a neighborhood $Q\subset U$ and a smooth map $f_U\simeq id$ with the following properties:
\begin{enumerate}[(i)]
\item \label{It_LocSmoothEmbCurve}$f_U(Z)\cap Q$ is a smooth embedded curve;
\item\label{It_PertOfIdentity} $f_U(z)=z$ and $f_U$ is the identity map on the complement of $U$; 
\item\label{It_DiffeoOutsideQ} The restriction of $f_U$ to $U\setminus \bar Q$ is a diffeomorphism onto its image;
\item\label{It_ZeroLocusLocally}  $f_U(Z)=s_U^{-1}(0)$ for some $s_U\in C^{0}(M; L)$.
\end{enumerate}  
Moreover, $f_U$ induces a natural homomorphism $\Flow (Z)\to \Flow (f_U(Z))$. 
\end{step}

Without loss of generality we can assume that $U$ is a coordinate chart centered at $z$.
Let $(x_1,x_2,x_3)$ be local coordinates on $U$ such that $\cZ_*$ is tangent to the $x_1$--axis.
It follows from the definition of the rescaling limit that there is a cube $Q:=(-\l,\l)^3\subset U$ such that $Z\cap Q\subset C:=(-\l,\l)\times D_{\l/2}$, where $D_{\l/2}\subset \R^2$ denotes the disc of radius ${\l/2}$ ceneterd at the origin.  
Choose a smooth function $\chi$, which vanishes on $\bar C$ and equals $1$ on the complement of a cube containing $Q$.
Then $f_U(x_1,x_2,x_3)=(x_1, \chi\cdot x_2,\chi\cdot x_3)$ is homotopic to the identity map and clearly satisfies \textit{\ref{It_LocSmoothEmbCurve}} and \textit{\ref{It_PertOfIdentity}}.
Property \textit{\ref{It_DiffeoOutsideQ}} can be checked directly for a special choice $\chi(x)=\chi_1(x_1)\chi_2(x_2^2+x_3^2)$, where $\chi_1$ and $\chi_2$ are suitable functions. 
To see  \textit{\ref{It_ZeroLocusLocally}}, notice that fixing a trivialization of $L$ over $U$, the equation $\chi\cdot s=s_U\comp f_U$ determines uniquely $s_U$, which has the required property.

Furthermore, to define the homomorphism $(f_U)_*\colon \Flow (Z)\to \Flow (f_U(Z))$ it is enough to consider the points at $Z\cap Q$. 
 If $f_U(Z)\cap Q$ is not connected, then by the construction there is a disc $D\subset Q$ such that $\partial D\subset Q\setminus (-\l,\l)\times D_{\l/2}$ and $D\cap Z =\varnothing$. 
 It is then easy to see that $(f_U)_*$ must vanish for all $z\in Z\cap Q$.
 If $f_U(Z)\cap Q$ is connected, then $(f_U)_*$ is uniquely specified by $(f_U)_*=id$ at the point $z$.

\begin{step}\label{Step_GraphLoc}
Let $B$ be a ball in $M$ centered at $v\in Z$ such that $\partial B \cap Z=\{ z_1,\dots, z_n \}$, $Z$ is smooth in a neighborhood of each $z_i$, and the intersection of $\partial B$ and $Z$ is transverse. 
Then there is a smooth map $f_B\simeq id_M$ such that 
\begin{enumerate}[(i)]
\item\label{It_C1GraphLoc} $f_B(Z)\cap B$ is an embedded graph equipped with a flow such that $v$ is an $n$-valent vertex at which \eqref{Eq_Flow} is satisfied. 
Moreover, each $z_i$ is connected with $v$ by a unique edge;
\item\label{It_FBID} $f_B$ is the identity map on the complement of $B$;  
\item \label{It_ZeroLocusLoc}  $f_B(Z)=s_B^{-1}(0)$ for some $s_B\in C^{0}(M; L)$;
\item  There is a natural homomorphism $(f_B)_*\colon\Flow(Z)\to \Flow (f_B(Z))$.
\end{enumerate} 
\end{step}

Without loss of generality we can assume that the radius of $B$ equals $1$.
Choose $\e>0$ so small that $Z\cap B\setminus B_{1-2\e}(v)$ consists of $n$ smooth connected curves.
Choose also a smooth monotone function $\chi\colon [0,1]\to [0,1]$ such that
\[ 
\chi (t)=
  \begin{cases}
  0 &\text{if } t\in [0, 1-2\e],\\
  1 &\text{if } t\in [1-\e , 1]. 
  \end{cases} 
\]  
Define 
\[
f_B(x):=
	\begin{cases}
	\chi(|x|)x &\text{if } x\in B,\\
	x & \text{otherwise}.
	\end{cases}
\] 
Clearly, $f_B(Z)\cap B$ is an embedded graph, which inherits a weight function and orientation from $Z\cap B\setminus B_{1-\e}$. 
By Definition~\ref{Defn_FlowGeneral},  we have $\sum \theta(z_i)\e(z_i)=0$, which immediately implies that  \eqref{Eq_Flow} holds at $v$. 
Part \textit{\ref{It_ZeroLocusLoc}} is proved just like the corresponding statement in  Step~\ref{Step_LocSmooth}. 
Finally, the last part follows, since the restriction of $f_B$ to $B\setminus B_{1-\e}$ is a diffeomorphism onto its image.

\begin{step}
I prove the statement of this lemma.
\end{step}

Let $z$ be an arbitrary point in $Z$ and let $\cZ_*$ be a rescaling limit of $Z$ such that $\cZ_*$ consists of finitely many rays. 
By the definition of the rescaling limit, for any $\e>0$ we can find some $\l>0$ such that $\exp_\l^{-1}(Z)\cap B_1(0)\subset U_{\e}(\cZ_*)$.
Choose $r\in [\tfrac 12,1]$ such that $\partial B_{\l r}(z)\cap Z$ contains only points admitting a line as a rescaling limit.  
The existence of $r$ follows from the observation that there are uncountably many choices for $r$, however $Z$ contains at most countably many points which do not admit a line as a rescaling limit.
Denote by $B(z)$ the chosen ball and by $U(z)\subset B(z)$ the corresponding open neighborhood of a cone containing $Z\cap B(z)$.

Since $Z$ is compact, there is a finite collection of balls as above $\{ B_i=B(z_i)\mid 1\le i\le I \}$ covering $Z$.
Pick one of these balls, say $B_1$. 
For each $z\in Z\cap\partial B_1$ choose a ball $B'(z)$ such that $\diam B'(z)\le \frac 14\diam B_1$ and denote by $Q(z)$ the open subset supplied by Step~\ref{Step_LocSmooth}.
Choose a finite collection $\{ (B_k', Q_k)\mid 1\le k\le K \}$ such that $\{ Q_k \}$ covers $Z\cap\partial B_1$. 
Without loss of generality we can assume in addition that the following holds: 
If for some $i\in\{ 1,\dots, I \}$ we have $B_k'\cap U_i\neq \varnothing$, then $B_k'\subset U_i$.
Notice also, that for each $B_k'$ there is a geodesic segment through the center of $B_k'$ such that $Z\cap B_k'$ is contained in a neighborhood of this geodesic segment. 

Apply Step~\ref{Step_LocSmooth} consecutively to $B_1',\dots, B_K'$ to obtain a map $f'\simeq id_M$ such that $f'(Z)$ is a smooth embedded submanifold in a neighborhood of $\partial B_1$.
Notice that the choice of the balls $B_k'$ ensures that $\{ U_i\mid i=\overline{2,I} \}$ covers $f'(Z)\setminus B_1$. 
If the intersection $\partial B_1\cap f'(Z)$ is not transverse, we can decrease slightly the radius of $B_1$ to get rid of the non-transverse intersection points.
This can be done so that $\{ U_i\mid i=\overline{2,I} \}$ still covers the part of $f'(Z)$ which is not contained in the new $B_1$.

Apply Step~\ref{Step_GraphLoc} to obtain a map $f_1\simeq id_M$ such that $f_1(Z)\cap B_1$ is an embedded graph equipped with a flow.
Repeating this procedure consecutively for all balls $B_i$ we obtain a map $f\simeq id_M$ such that $G:=f(Z)$ is an embedded graph.
Moreover, we obtain $f_*\colon\Flow (Z)\to\Flow (G)$ by composing corresponding homomorphisms at each step of the construction; 
Also, if $Z$ is the zero locus of a continuous section, so is $G$, since this property is preserved by each step of the construction.
Part~\textit{\ref{It_FlowsInducedBySections}} can be seen by tracing through the above proof and shrinking the corresponding neigborhoods chosen above if necessary.
Finally, the last part is immediate from the construction. 
\end{proof}

\begin{remark}
	\label{Rem_NdhdContrOnGraph}
	The proof of~\autoref{Lem_ContractingMap} shows that the following holds: The map $f$ in fact can be chosen so that its restriction to a neighborhood $V$ of $Z$ is a homotopy equivalence between $V$ and $G=f(Z)$.
	The only minor modification in the proof is needed at~\autoref{Step_GraphLoc}, namely an extra collapse of a neighborhood of $Z$ in $B\setminus B_{1-2\e}(v)$.  
	
	Clearly, we can assume that the boundary of $\overline V$ is a smoothly embedded surface. 
	To see this, it is enough to pick a non-negative smooth function $\varphi$ such that $\varphi^{-1}(0) =Z$ and shrink $V$ to $\varphi^{-1}\bigl ([0,\e)\bigr )$, where $\e>0$ is a sufficiently small regular value of $\varphi$. 
\end{remark}

\begin{remark}
	The fact that $Z$ can be mapped onto an embedded graph by a map homotopic to the identity map can be proved in a less technical manner. 
	Namely, one can choose a handle decomposition of $M$ and first ``push'' a suitable subset of each 3-handle to its boundary so that $Z$ will be mapped to the union of 0-, 1-, and 2-handles. 
	Using the same sort of arguments one can push $Z$ further to a 1-skeleton of $M$. 
	This argument requires $Z$ to be closed of 2-dimensional Hausdorff measure zero only, however the resulting map will not satisfy~\textit{\ref{It_SmallCollapse}} of~\autoref{Lem_ContractingMap}, since the construction requires collapses of large subsets of $M$.
	In contrast, the map constructed in \autoref{Lem_ContractingMap} contracts a small portion of $M$ only.
	This is used in the proof \autoref{Prop_HausdMeasDimOfZ} to obtain a lower bound on the Hausdorff measure of $Z$.  
\end{remark}

\begin{proof}[\textbf{Proof of~\autoref{Prop_NaturalHomPD}}]
	Fix a neighborhood $V$ and a graph $G$ as in Remark~\ref{Rem_NdhdContrOnGraph} (for some map $M\to M$ supplied by~\autoref{Lem_ContractingMap}). 
	Observe that the long exact sequence of the pair $(\overline V, \partial \overline V)$ yields 
	\[
	0\to H_2 (\overline V, \partial\overline V )\to H_1(\partial\overline V )\to H_1(\overline V)\to 0,
	\]
	where we used $H_2(\overline V)\cong 0$ and $H_1(\overline V, \partial\overline V)\cong H^2(\overline V )\cong 0$.
	Using  $H_1( \overline V)\cong H_1(G)$,  we obtain 
	\[
	H_1(G)\cong H_1(\partial\overline V )/ H_2 (\overline V, \partial\overline V )\cong H^1(\partial\overline V )/ H^1(\overline V),
	\]
	cf. Remark~\ref{Rem_FlowsViaCohom}.

	 Furthermore, pick any map $f$ supplied by~\autoref{Lem_ContractingMap} such that $f$ is the identity map outside of $V$. 
	 Denote by $G_f:=f(Z)$ the corresponding embedded graph.
	 Let also $U$ be an open set containing $Z$ such that $U$ is homotopy equivalent to $G_f$.	 
	   Define
	\begin{equation*}
	\Gamma(Z, \theta, or): = [G_f, f_*(\theta,  or)],
	\end{equation*}
where the brackets on the right hand side denote the homology class of an embedded graph as in~\autoref{Prop_HomolClassEmbGraph}.
The commutativity of the diagram
\smallskip
\begin{center}
	\includegraphics{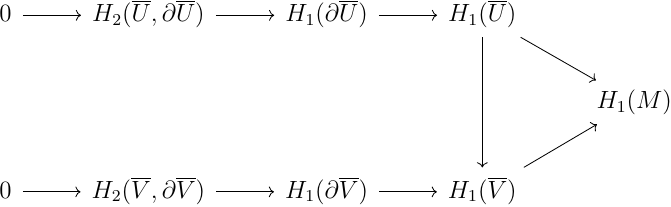}
\end{center}
together with the discussion above show that $\Gamma$ does not depend on the choice of $f$.

Part~\textit{\ref{It_PD}} of this proposition is obtained by combining~\autoref{Prop_ZeroLocusEmbeddedGraph}, \textit{\ref{It_ZisPDc1IfGraph}} and~\autoref{Lem_ContractingMap}, \textit{\ref{It_FlowsInducedBySections}}.
\end{proof}

\subsection{A lower bound for the Hausdorff measure of a locally graph-like set}

Denote by $\cH^1$ the $1$-dimensional Hausdorff measure induced by the Riemannian metric on $M$. 
\begin{proposition}
	\label{Prop_HausdMeasDimOfZ}
	Let $Z$ be a compact locally graph-like set equipped with a flow $(\theta, or)$ such that $a:=[Z,\theta, or]\neq 0$. 
	There is a positive constant $C(a, g)$ depending only on $a$ and the Riemannian metric $g$ such that
	\begin{equation}
	\label{Eq_LowerBoundH1Z_gen}
	\cH^1(Z)\ge C(a,g).
	\end{equation}
\end{proposition}
\begin{proof}
	For a subset $S\subset M$ denote by $\cH^1_\delta(S)$ the $\delta$-approximation of $\cH^1(S)$.
	For any $a\in H_1(M;\Z)$ define
	\[
	C_\delta(a,g):=\inf \bigl \{ \cH^1_\delta(G)\mid G \text{ embedded graph},\  \exists (\theta, or)\in\Flow(G),\ [G,\theta, or]=a \bigr \}.
	\]
		 
	Let $\bigl \{ V_i\mid i\in I \bigr \}$ be an arbitrary countable covering of $Z$ by open sets such that $\diam V_i<\delta$ for all $i\in I$. 
	Since $Z$ is compact, there is $\sigma>0$ such that 
	\[
	U_{\sigma}(Z):=\bigl \{ m\in M \mid d(m, Z)<\sigma \bigr \}\;\subset\; V:=\bigcup_{i\in I} V_i.
	\]
	By~\autoref{Lem_ContractingMap}, there is an embedded graph $G$ equipped with a flow $(\theta_1, or_1)$ such that $G$ is contained in $U_\sigma(Z)$ and $[G,\theta_1, or_1] =a$. 
	Since the collection $\bigl \{  V_i \bigr \}$ covers $G$, we have
	\[
	\sum_{i\in I} \diam V_i \ge \cH^1_\delta(G)\ge C_\delta(a,g).
	\]
	Hence, for all $\delta>0$ we have 
	\begin{equation}
	\label{Eq_H1ZintermBound}
	\cH^1_\delta (Z)\ge C_\delta(a,g).
	\end{equation}

	Furthermore, it follows from~\cite{Bogachev07_MeasureTheory}*{Lemma 3.10.10} that for any 1-dimensional submanifold $\gamma$ of $M$ we have $\cH^1_\delta (\gamma) = \cH^1 (\gamma)$ for any $\delta>0$ .
	Hence, for any embedded graph $G$ we have also $\cH^1_\delta (G) = \cH^1 (G)$ so that $C_\delta(a,g)$ does not actually depend on $\delta$ and equals
	\[
	C(a,g)=\inf \bigl \{  \ell(G) \mid G \text{ embedded graph},\  \exists (\theta, or)\in\Flow(G),\ [G,\theta, or]=a\bigr \},
	\]
	where 	$\ell (G):=\sum_{e\in E}\mathrm{length}(e)$ is the total length of $G$.
	
	Observe  that $C(a,g)>0$ provided $a\neq 0$. 
	Indeed,  if $C(a,g)=0$, the class $a$ could be represented by an embedded graph, whose connected components would be contained in (small) balls, which contradicts $a\neq 0$.
	
	By \eqref{Eq_H1ZintermBound}, we obtain $\cH^1_\delta(Z)\ge C(a, g)$ for all $\delta>0$ thus finishing the proof of this proposition.
\end{proof}

\subsection{The case of a rectifiable set}\label{Subsec_RectifCase}

Let $Z$ be the zero locus of $s\in C^0(M;L)$ as in the previous sections. 
Here, however, we assume the following:
\begin{enumerate}[(A)]
	\item 
	\label{Hyp_ZlocGraphlike}
	$Z$ is locally graph-like;
	\item\label{Hyp_LocHomGp} 
	For $\cH^1$--almost all $z\in Z$ there is $\bar r>0$ such that $H_1\bigl ( B_r(z)\setminus Z;\; \Z\bigr)\cong \Z$ for all $r\in (0,\bar r)$;
	\item\label{Hyp_Rectif} 
	$Z$ is a (countably) rectifiable subset of $M$ and $\cH^1(Z)<\infty$.
\end{enumerate}

Denote by $Z^\times$ the set of all those points $z\in Z$ such that at least one of the following conditions hold:
\begin{itemize}[topsep=0.2em]
	\item	$H_1\bigl ( B_{r_k}(z)\setminus Z;\; \Z\bigr)\not\cong \Z$  for some sequence $r_k\to 0$;
	\item no line is a rescaling limit of $Z$ at  $z$.
\end{itemize}
Notice that $Z^\times$ is of  $\cH^1$-measure zero.

Since $Z$ is rectifiable, we have
\begin{equation}\label{Eq_ZasUnion}
Z=\bigcup\limits_{j=0}^\infty Z_j,
\end{equation}
where $\cH^1(Z_0)=0$ and each $Z_j,\ j\ge 1$ is an embedded $C^1$-curve.
Notice that without loss of generality we can assume that \eqref{Eq_ZasUnion} is a \emph{disjoint} union \cite{Simon83_LectOnGMT}*{11.7}.

If $z\in Z_j$, then $T_zZ_j$ must be contained in any rescaling limit of $Z$ at $z$. 
Hence, if $Z$ admits a line as a rescaling limit at $z$, then this line must be $T_zZ_j$. 
If in addition $z\notin Z^\times$, then for any  $r\ll 1$ there is a circle $\gamma_z\subset B_r(z)\setminus Z$ generating $H_1\bigl (B_r(z)\setminus Z\bigr )$.  
Clearly, the multiplicity (or weight)
\[
\theta(z):=\bigl |\deg  ( s\colon \gamma_z\to \C^*) \bigr |
\]
does not depend on the choice of $\gamma_z$.
Moreover, if $\theta(z)\neq 0$, we can orient $T_zZ$ just like in the case of an embedded graph.

Thus, we obtain the multiplicity function $\theta\colon Z\setminus (Z_0\cup Z^\times)\to\Z_{\ge 0}$, which is locally constant.
This in turn determines an orientation of \[
\vec Z=\{z\in Z\setminus (Z_0\cup Z^\times)\mid \theta\neq 0 \},
\] 
which can be interpreted as  a continuous vector field $\xi$ on $\vec Z$ such that $|\xi (z)|=1$ for all $z\in\vec Z$.

\begin{proposition}\label{Prop_ZisCurrent}
If~\ref{Hyp_ZlocGraphlike}--\ref{Hyp_Rectif}  holds, then $(\vec Z, \theta,\xi)$ is an integer multiplicity current without boundary. \end{proposition}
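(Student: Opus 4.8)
The goal is to show $(\vec Z,\theta,\xi)$ defines an integer multiplicity rectifiable $1$-current $T$ in $M$ with $\partial T = 0$. That it defines an integer multiplicity rectifiable current at all is essentially built into the hypotheses: $\vec Z \subset Z$ is rectifiable with $\cH^1(\vec Z)\le \cH^1(Z)<\infty$ by \ref{Hyp_Rectif}, the multiplicity $\theta$ is an $\cH^1$-measurable $\Z_{\ge 0}$-valued function (indeed locally constant on $Z\setminus(Z_0\cup Z^\times)$), and $\xi$ is the orienting unit simple vector field described above. Since $\theta$ is locally constant and $Z$ is compact, $\theta$ is bounded, so $\int_{\vec Z}\theta\,d\cH^1<\infty$, i.e. $\mathbf{M}(T)<\infty$. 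Thus $T(\omega):=\int_{\vec Z}\langle\omega,\xi\rangle\,\theta\,d\cH^1$ is a well-defined current; the only real content is $\partial T=0$.

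**Reducing $\partial T = 0$ to a local statement.** By definition $\partial T(\eta) = T(d\eta)$ for $\eta\in C^\infty_c(M)$ (a function, since $T$ is a $1$-current in a $3$-manifold and $d\eta$ is a $1$-form). I would first dispose of the bad set: $Z_0\cup Z^\times$ has $\cH^1$-measure zero, and a rectifiable current is supported, up to its density, on a countable union of $C^1$-curves, so the mass of $T$ is concentrated on $\bigcup_{j\ge 1}(Z_j\setminus Z^\times)$. The strategy is to use the constancy theorem / the structure of $1$-currents: it suffices to show that for every point $p\in M$ and every sufficiently small ball $B=B_r(p)$ with $\cH^1(Z\cap\partial B)=0$ and $\partial B$ transverse to $Z$ in the sense of Definition~\ref{Defn_FlowGeneral}, the slice $\langle T,\partial B\rangle$ — which is a $0$-current, a finite integer combination of points $z_i\in Z\cap\partial B$ with multiplicities $\pm\theta(z_i)$ — has total mass-with-sign equal to zero. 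This is exactly the flow condition of Definition~\ref{Defn_FlowGeneral} applied to $\Sigma=\partial B$, which holds because $(\theta,or)$ is the flow induced by $s$ (Lemma~\ref{Lem_SectionDeterminesFlow}): the signed count is $\sum_i \deg(s\colon\partial D_r(z_i)\to\C^*)=0$ by the triviality of $L$ restricted to the small surface. Vanishing of all such boundary slices, together with $\mathbf{M}(T)<\infty$, forces $\partial T$ (a $0$-current of locally finite mass, since $\partial$ of a finite-mass rectifiable current has locally finite mass here) to be zero: concretely, for almost every $r$ one has $\partial(T\llcorner B_r(p)) = \langle T,\partial B_r(p)\rangle$, and letting $r\to 0$ while the slices vanish identically gives $\partial T\llcorner B_\rho(p)=0$ for all $\rho$, hence $\partial T=0$.

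**Carrying out the slicing carefully.** The one point requiring genuine care is that $Z$ need not be an embedded graph, so the elementary picture of "edges meeting at a vertex" is not literally available; this is where the hypothesis \ref{Hyp_LocHomGp} and the apparatus of locally graph-like sets enter. For $\cH^1$-a.e.\ $z\in\partial B\cap Z$ one has, for $r$ small, $H_1(B_r(z)\setminus Z)\cong\Z$ generated by a loop $\gamma_z$ on which $\deg(s\colon\gamma_z\to\C^*)=\pm\theta(z)$; choosing the radius $r$ of $B$ generically (a co-countable set of radii works, since $Z^\times$ is $\cH^1$-null and $Z\cap\partial B_r(p)$ is finite for a.e.\ $r$ by the coarea/slicing theorem for rectifiable sets) we may assume $\partial B$ meets $Z$ only at such good points and transversally, and that $\cH^1(Z\cap\partial B)=0$. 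Then the slice $\langle T,\partial B\rangle$ is, by the rectifiable slicing theorem, the integer $0$-current $\sum_i \epsilon(z_i)\theta(z_i)[\![z_i]\!]$ with $\epsilon(z_i)$ the sign comparing the orientation of $\xi(z_i)\oplus T_{z_i}\partial B$ with that of $M$ — precisely $I(\cZ_{z_i*},T_{z_i}\partial B)$. Definition~\ref{Defn_FlowGeneral} gives $\sum_i I(\cZ_{z_i*},T_{z_i}\partial B)=0$, i.e.\ the slice vanishes.

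**Main obstacle.** The delicate point is not the vanishing of the individual slices — that is handed to us by the flow condition — but the passage from "all boundary slices vanish" to "$\partial T=0$" as currents, which requires knowing that $\partial T$ has locally finite mass (so that it is a genuine measure-like object whose restriction to balls can be recovered from slices) and handling the $\cH^1$-null exceptional set $Z_0\cup Z^\times$ where the local topological picture degenerates. I expect to invoke the Federer–Fleming / Simon slicing machinery (\cite{Simon83_LectOnGMT}) in the form: a rectifiable current $T$ with $\mathbf{M}(T)<\infty$ such that $\langle T,u,r\rangle$ has zero total multiplicity for a.e.\ level $r$ of enough Lipschitz functions $u=d(p,\cdot)$ must have $\partial T=0$; alternatively one proves directly that $\langle\partial T,\varphi\rangle=0$ for a generating family of test functions built from such distance functions via a monotone-class/approximation argument. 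Making the exceptional set genuinely irrelevant — rather than merely $\cH^1$-small — is the step I would write out most carefully.
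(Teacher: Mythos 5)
Your reduction to the vanishing of signed slice sums is the right geometric input, and it coincides with what the paper uses (the flow condition of Definition~\ref{Defn_FlowGeneral}, via the degree argument of Lemma~\ref{Lem_SectionDeterminesFlow}). The gap is precisely the step you defer: passing from ``all sphere slices have zero total signed multiplicity'' to ``$\partial T=0$''. The identity you invoke --- that for a.e.\ $r$ the boundary of the restriction of $T$ to $B_r(p)$ equals the slice by $\partial B_r(p)$ --- is the slicing formula for \emph{normal} currents; in general the slice is the boundary of the restricted current \emph{minus} the restriction of $\partial T$ to the ball, and that second term only makes sense once $\partial T$ is known to have locally finite mass. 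A priori $T$ is merely a finite-mass integer rectifiable $1$-current, so $\partial T$ is only a distribution of order one (bounded against $\|d\eta\|_{C^0}$), and the form of the Federer--Fleming/Simon machinery you propose to quote (finite-mass rectifiable $T$ whose distance-function slices have zero total multiplicity must satisfy $\partial T=0$) is not a citable theorem; as written, your reassembly step is circular, since the restrictability of $\partial T$ is essentially what is being proved. The route could be repaired, e.g.\ by approximating arbitrary test functions in the $C^1$-norm by finite sums of functions of the form $\varphi\bigl(d(p,\cdot)\bigr)$ so that the coarea computation applies term by term, but you neither state nor carry out such an argument.

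The paper sidesteps the difficulty by slicing with the test function itself: $\partial T(f)=T(df)=\int_{\vec Z}\theta\,\langle df,\xi\rangle\,d\cH^1$, which by linearity may be computed for $f$ supported in a contractible open set $U$; the area formula applied to $f|_{\vec Z}$ converts this into
\[
\int_{\R}\Bigl(\sum_{z\in f^{-1}(t)\cap\vec Z}\theta(z)\,\epsilon(z)\Bigr)\,d\cL^1(t),
\]
and for a.e.\ $t$ --- a regular value of both $f$ and $f|_{\vec Z}$ with $t\notin f(Z_0\cup Z^\times)$ --- the level set $\Sigma_t=f^{-1}(t)\subset U$ is a smooth surface inside a contractible set, to which the degree argument of Lemma~\ref{Lem_SectionDeterminesFlow} applies, so the integrand vanishes almost everywhere. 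This evaluates $\partial T$ on every test function directly, with no need for finite boundary mass, for restricting $\partial T$ to balls, or for any limiting reassembly over radii; it also automatically disposes of the exceptional set $Z_0\cup Z^\times$, since its image under $f$ has Lebesgue measure zero. If you replace your distance spheres by the level sets of the test function in this way, your argument becomes the paper's proof.
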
 
\begin{proof}
We only need to show that the boundary of $(\vec Z, \theta,\xi)$ is empty, i.e.,  
\begin{equation}\label{Eq_AuxNoBoundary}
\int_{\vec Z}\theta \langle df,\xi\rangle\, d\cH^1 =0
\end{equation}
for any smooth function $f$ on $M$. 
Since the left hand side of \eqref{Eq_AuxNoBoundary} is linear in $f$, it is enough to prove \eqref{Eq_AuxNoBoundary} for those functions, whose support is contained in a contractible subset.

Thus, let $U\subset M$ be contractible and $\supp f\subset U$.
Denote $f_Z= f|_{\vec Z}$.  
Let $Jf_Z$ denote the Jacobian of $f_Z$ (in the sense of the geometric measure theory).
Then we have
\begin{equation}\label{Eq_AuxZhasNoBdry}
\int\limits_{\vec Z} \theta \langle df,\xi\rangle\, d\cH^1=
\int\limits_{\vec Z} \theta \sign (\langle df,\xi\rangle  ) J f_Z\, d\cH^1= 
\int\limits_{\R}\Bigl ( \sum\limits_{z\in f^{-1}(t)\cap\vec Z} \theta(z)\e (z) \Bigr )\, d\cL^1(t),
\end{equation} 
Here the first equality follows from the definition of the Jacobian and the second one follows from the area formula.

Notice that if $t$ is a regular value of $f$, then $\Sigma_t=f^{-1}(t)\subset U$ is smooth and contained in a contractible set, namely $U$. 
Also, almost any $t\in\R$ is a regular value of both $f$ and $f_Z$ and $f(Z_0\cup Z^\times)$ is of measure zero.
Hence, an argument used in the proof of~\autoref{Lem_SectionDeterminesFlow} shows that  the right hand side of \eqref{Eq_AuxZhasNoBdry}  vanishes for almost all $t$.
\end{proof}

\begin{rem}
	As we will see below, in the case of the blow-up set for the Seiberg--Witten equation Conditions~\ref{Hyp_ZlocGraphlike} and~\ref{Hyp_Rectif} are known to hold true, whereas~\ref{Hyp_LocHomGp} requires further studies. 
	Clearly, it is also possible to replace~\ref{Hyp_LocHomGp} by other conditions, but we will not go into the details here.
\end{rem}

\section{The infinitesimal structure of the blow-up set for the Seiberg--Witten monopoles with multiple spinors}
\label{Sect_BlowUpSet}

In this section we prove~\autoref{Thm_ZeroLocusPDdetL_moreprecise}, which is a somewhat more precise version of~\autoref{Thm_ZeroLocusPDdetLB}, as well as its generalization for the case of $n\ge 3$ spinors.
Also, we obtain a lower bound for the 1-dimensional Hausdorff measure of blow-up sets for the Seiberg--Witten equations with two spinors, see~\autoref{Thm_HausdMeasDimOfZ}.

\medskip

As already mentioned in the introduction, a blow-up set for the Seiberg--Witten equations with two spinors is the zero locus of a continuous section. 
Indeed, let $(A,\Psi, 0)$ be a solution of~\eqref{Eq_nSW} over $M\setminus Z$ with $n=2$.
By~\cite{HaydysWalpuski15_CompThm_GAFA}*{Thm. 1.5} (see also Prop.\,0.1 of the Erratum) $A$ is flat with the holonomy in $\Z/2\Z$, in particular the holonomy of the induced connection on $L\to M\setminus Z$ is trivial. 
Let $s_0$ be a parallel section of $L$ over $M\setminus Z$. 
Then 
\begin{equation}
	\label{Eq_SectionS}
s:=|\Psi|\cdot s_0
\end{equation}
is a continuous section of $L$ defined on all of $M$ such that $Z=s^{-1}(0)$.

\begin{lem}
	\label{Lem_BlowUpIsLocGraphlike}
	$\phantom{A}$
\begin{enumerate}[(i)]
	\item 
	\label{It_BlowUpSetLocGraphlike}
	A blow-up set $Z$ for the Seiberg--Witten equations with $n$ spinors is a compact locally graph-like set;
	\item 
	\label{It_ZSrescLimits}
	If $n=2$, the pair $(Z, s)$ admits a rescaling limit at each point $z\in Z$ , where $s$ is given by~\eqref{Eq_SectionS}.
\end{enumerate}	
\end{lem}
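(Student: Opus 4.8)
The plan is to derive both statements from the analytic results of Taubes~\cite{Taubes14_ZeroLoci_Arx} on zero loci of $\Z/2$ harmonic spinors together with the correspondence, recalled in the introduction, between gauge-equivalence classes $[A,\Psi,0]$ of solutions of~\eqref{Eq_nSW} and $\Z/2$ harmonic spinors. First I would record that a blow-up set $Z$ is by definition closed and nowhere dense in the closed manifold $M$, hence compact; this gives compactness for free. The substantive content is the \emph{locally graph-like} property, and here the strategy is: fix $z\in Z$, pass to normal coordinates via $\exp_{z}$, and rescale $\Psi$ (equivalently $|\Psi|$, or the section $s$ in the $n=2$ case) by the factors $\l_i\to 0$ extracted from Taubes' monotonicity/compactness machinery. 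By Taubes' results — which apply because of the finite-energy bound in Definition~\ref{Defn_BlowUpSet}\ref{It_NablaPsi} — the rescaled zero loci $\exp_{\l_i}^{-1}(Z)\cap B_{\e^{-1}}(0)$ converge (in the sense of condition~\ref{It_RescalinLimitA}) to a dilation-invariant cone $\cZ_*$, and the frequency/degree considerations of~\cite{Taubes14_ZeroLoci_Arx} force this cone to consist of finitely many rays. This establishes condition (i) of the definition of locally graph-like. For condition (ii), I would invoke the structure theorem of~\cite{Taubes14_ZeroLoci_Arx} (or the refinement via the Hausdorff dimension bound already quoted after Definition~\ref{Defn_BlowUpSet}) to the effect that the set of points where the tangent cone is \emph{not} a single line is contained in a set of Hausdorff dimension zero, in particular is at most countable; at every other point of $Z$ a line is a rescaling limit.

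For part~\ref{It_ZSrescLimits}, I would upgrade the convergence of zero loci to convergence of the section. Since $n=2$, the induced connection on $L$ over $M\setminus Z$ is flat with trivial holonomy by~\cite{HaydysWalpuski15_CompThm_GAFA}*{Thm.~1.5}, so $s=|\Psi|\cdot s_0$ with $s_0$ parallel, as in~\eqref{Eq_SectionS}. Pulling back by $\exp_{\l_i}$ and using the parallel trivialization $s_0$ over the punctured ball, the rescaled section $\exp_{\l_i}^*s$ is represented by the scalar function $|\Psi|$ composed with $\exp_{\l_i}$, suitably renormalized. Taubes' asymptotic analysis gives that, away from the cone $\cZ_*$, these rescaled functions converge in $C^0$ on $B_{\e^{-1}}(0)\setminus U_\e(\cZ_*)$ to a homogeneous model $s_*\colon T_zM\to\C$ (the leading-order term of the $\Z/2$ harmonic spinor, a homogeneous harmonic polynomial on the transverse $\R^2$ times the constant $\R$-direction) whose zero set is exactly $\cZ_*$. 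After normalizing $s_*$ to avoid the trivial zero, this verifies condition~\ref{It_RescalingLimitII} and hence exhibits $(\cZ_*,s_*)$ as a rescaling limit of $(Z,s)$ at $z$.

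The main obstacle is marshalling the precise form of Taubes' results: his theorems are stated for $\Z/2$ harmonic spinors (or, more generally, sections satisfying a Dirac-type equation with the relevant frequency function), so one must carefully transport them through the correspondence of~\cite{Haydys:12_GaugeTheory_jlms} and~\cite{HaydysWalpuski15_CompThm_GAFA}*{App.~A}, and in particular check that a blow-up set in the sense of Definition~\ref{Defn_BlowUpSet} — where finite energy is \emph{assumed} rather than derived from a limiting sequence — still satisfies the hypotheses needed for the monotonicity formula and the existence of tangent cones. I would also need to be slightly careful that the rescaling factors $\l_i$ producing a ray-cone limit of $Z$ can be chosen so that they simultaneously produce a $C^0$-limit of the section in part~\ref{It_ZSrescLimits}; this is handled by a diagonal argument, choosing the subsequence for the section inside the subsequence already chosen for the zero locus. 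Everything else is bookkeeping with normal coordinates and the parallel trivialization.
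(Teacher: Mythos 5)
Your treatment of part (i) follows the paper's route (Taubes' tangent-cone results applied to the $\Z/2$ harmonic spinor obtained from $(A,\Psi,0)$, with the finite-energy hypothesis of Definition~\ref{Defn_BlowUpSet} feeding the monotonicity/frequency machinery), but it only covers $n=2$: the correspondence with $\Z/2$ harmonic spinors that you invoke is the $n=2$ correspondence, and Taubes' theorems do not apply directly to the limiting data for $n>2$ (a Fueter section into $\mathring M_{1,n}$). The paper's first step is precisely the missing reduction: by Proposition~0.1 of the Erratum to \cite{HaydysWalpuski15_CompThm_GAFA}, a solution of~\eqref{Eq_nSW} with $n>2$ spinors and $\tau=0$ produces a two-spinor solution with the same zero locus, so a blow-up set for $n$ spinors is a blow-up set for two spinors; without this (or an equivalent) observation your argument does not prove statement (i) as stated. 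A smaller point: ``Hausdorff dimension zero, in particular at most countable'' is not a valid implication (dimension-zero sets can be uncountable); the countability of the set of points with no line as a rescaling limit is quoted directly from Lemmas 6.1 and 6.3 of \cite{Taubes14_ZeroLoci_Arx}, so cite that instead.

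For part (ii) there is a genuine flaw in the way you set up the convergence of the section. You propose to trivialize $\exp_{\l_i}^*L$ by the parallel section $s_0$ over the punctured ball; but in that trivialization $\exp_{\l_i}^*s$ is the nonnegative real function $|\Psi|\comp\exp_{\l_i}$, so the only limit this can produce is $|\psi_*|\ge 0$ --- not the complex ``leading-order homogeneous harmonic polynomial'' you describe (that object is the limit of the spinor $\psi$, not of the line-bundle section $s$; the two are related only through $|s_*|=|\psi_*|$). Moreover a trivialization defined only away from $Z$ is exactly the wrong one: the infinitesimal weights $\theta_*$ are degrees of $s_*$ on circles linking the rays of $\cZ_*$, and these must agree with degrees of $s$ computed in a trivialization of $L$ over a \emph{whole} ball around $z$ (this is what is used in the proof of Lemma~\ref{Lem_SectionDeterminesFlow}); in your trivialization all such degrees vanish, the induced flow would be identically zero, and Theorem~\ref{Thm_ZeroLocusPDdetLB} would fail whenever $c_1(L)\neq 0$. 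The paper's proof supplies the missing analytic step: fix a smooth trivialization $\sigma$ of $L$ on a full neighborhood of $z$, note that in the pulled-back trivializations the flat connections $\exp_{\l_i}^*A$ subconverge in $C^\infty_{loc}(T_zM\setminus\cZ_*)$ to a flat connection $A_*$, hence $\exp_{\l_i}^*s_0$ subconverges to a nowhere-vanishing $A_*$-parallel section, and therefore $\exp_{\l_i}^*s$ subconverges in $C^0_{loc}$ to some $s_*$ with $|s_*|=|\psi_*|$, so that $s_*^{-1}(0)=\cZ_*$. This convergence of the connections and of $s_0$ in a trivialization that extends across $Z$ is the actual content of part (ii), and it is absent from your proposal; your diagonal-subsequence remark is fine but does not substitute for it.
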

\begin{proof}
Notice first  that it is enough to prove~\textit{\ref{It_BlowUpSetLocGraphlike}} for $n=2$. 
Indeed, if $n> 2$ and $Z$ is a blow-up set in the sense of Definition~\ref{Defn_BlowUpSet}, then $Z$ is also a blow-up set  for $n=2$ by Proposition~0.1 of~\cite{HaydysWalpuski15_CompThm_GAFA}.

Thus, assume that $Z$ is a blow-up set for the Seiberg--Witten equation with two spinors and let $(A,\Psi, 0)$ be a corresponding  solution of~\eqref{Eq_nSW}.
Then the projection $\psi$ of $\Psi$ is a $\Z/2\Z$-harmonic spinor~\cite{HaydysWalpuski15_CompThm_GAFA}*{Prop A.1}; Moreover, we have the pointwise equality $|\Psi|=|\psi|$ as well as the estimate $\int_{M\setminus Z}|\nabla\psi|^2<\infty$, which follows from Definition~\ref{Defn_BlowUpSet}, \textit{\ref{It_NablaPsi}}.
By~\cite{Taubes14_ZeroLoci_Arx}*{Prop. 4.1}  applied to the constant sequence $z_i=z$ and arbitrary $\l_i\to 0$ we obtain that there is a rescaling limit $(\cZ_*,\psi_*)$ of $(Z, \psi)$ at any point $z\in Z$ in the sense described by \cite{Taubes14_ZeroLoci_Arx}*{Prop. 4.1}. 
In particular, $\cZ_*$ is a cone consisting of finitely many rays \cite{Taubes14_ZeroLoci_Arx}*{Lemma 5.4} and the sequence $\exp_{\l_i}^*|\Psi|=|\Psi|\comp \exp_{\l_i}$ converges to $|\psi_*|$ in $C^0_{loc}(T_zM\setminus \cZ_*)$.
Moreover, by \cite{Taubes14_ZeroLoci_Arx}*{Lemmas 6.1 and 6.3} $\cZ_*$ is a line for all but at most countably many points of $Z$. 
In particular, $Z$ is a locally graph-like set; Clearly, $Z$ is also compact.

Furthermore, pick a smooth trivialization $\sigma$ of $L$ in a neighborhood of $z$. 
Interpret $\exp_{\l_i}^*A$ as a sequence of flat connections on the product bundle, where $L$ is trivialized by $\sigma_i:=\sigma\comp\exp_{\l_i}$. 
Hence, the sequence $\exp_{\l_i}^*A$ has a subsequence, which converges in $C^\infty_{loc}(T_zM\setminus\cZ_*)$ to some flat connection $A_*$. 
In particular, a subsequence of $\exp_{\l_i}^*s_0$, which is considered as a section of the product bundle over $T_zM\setminus\cZ_*$,  converges in $C^0_{loc}(T_zM\setminus\cZ_*)$ to a parallel section of $A_*$.
Hence, $\exp_{\l_i}^* s$ also has a subsequence, which converges to some $s_*$ in $C^0_{loc} (T_zM\setminus\cZ_*)$.
The pointwise equality $|s_*|=|\psi_*|$ implies that $s_*$ vanishes precisely on $\cZ_*$.
This shows that $s_*$ is a rescaling limit of $s$. 
\end{proof}  

\begin{remark}
	\label{Rem_W12forLimits}
		Observe that for any solution $(A,\Psi,\tau)$ of~\eqref{Eq_nSW} with $\tau\neq 0$ by the Weitzenb\"ock formula we have
		\[
		\int_M |\nabla^A\Psi|^2\le \int_M |\nabla^A\Psi|^2 +\tau^{-2}\int_M |\mu(\Psi)|^2 = -\frac 14\int_M scal_g\, |\Psi|^2\le C, 
		\] 
		where $C=\max \{ -s/4, 0 \}\ge 0$  and $scal_g$ denotes the scalar curvature of the background Riemannian metric $g$ on $M$. 
		Hence, any solution $(A,\Psi, 0)$ of~\eqref{Eq_nSW} arising as a limit of some sequence $(A_k,\Psi_k,\tau_k)$ over $M\setminus Z$ with $\tau_k\to 0$ satisfies Condition~\ref{It_NablaPsi} of Definition~\ref{Defn_BlowUpSet}.
		The fact that $(A_k,\Psi_k, 0)$ also satisfies Condition~\ref{It_HoelderCont} follows from~\cite{HaydysWalpuski15_CompThm_GAFA}*{Prop. 6.1}.
\end{remark}
		
\begin{thm}
	\label{Thm_HausdMeasDimOfZ}
	Let $Z$ be a blow-up set for the Seiberg--Witten equations with two spinors corresponding to the determinant line bundle $L$ with $a=\PD(c_1(L))\neq 0$.
	\begin{enumerate}[(i)]
		\item 
		\label{It_HausdMeasBlowUpSet}
		There is a positive constant $C(a, g)$ depending only on $a$ and the Riemannian metric $g$ such that
		\begin{equation}
		\label{Eq_LowerBoundH1Z}
		\cH^1(Z)\ge C(a,g).
		\end{equation}
		\item The Hausdorff dimension of $Z$ equals $1$.
	\end{enumerate}
\end{thm}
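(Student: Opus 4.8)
The plan is to deduce \autoref{Thm_HausdMeasDimOfZ} directly from the structural results already established for locally graph-like sets. For part~\textit{\ref{It_HausdMeasBlowUpSet}}, I would first invoke \autoref{Lem_BlowUpIsLocGraphlike}, \textit{\ref{It_BlowUpSetLocGraphlike}} to see that a blow-up set $Z$ for the Seiberg--Witten equation with two spinors is a compact locally graph-like set, and \textit{\ref{It_ZSrescLimits}} to see that the pair $(Z, s)$, with $s=|\Psi|\cdot s_0$ as in~\eqref{Eq_SectionS}, admits a rescaling limit at every point. Then \autoref{Lem_SectionDeterminesFlow} supplies a flow $(\theta, or)$ on $Z$ induced by $s$, and \autoref{Prop_NaturalHomPD}, \textit{\ref{It_PD}} gives $[Z,\theta, or]=\PD(c_1(L))=a$. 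Since by hypothesis $a\neq 0$, \autoref{Prop_HausdMeasDimOfZ} applies verbatim and yields $\cH^1(Z)\ge C(a,g)$ with $C(a,g)>0$ depending only on $a$ and $g$. This is essentially a bookkeeping argument assembling the pieces of Section~2.

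For part~(ii), the statement that $\dim_{\cH} Z = 1$, I would argue by combining the lower and upper bounds. The upper bound $\dim_{\cH} Z\le 1$ is already recorded in the introduction as a consequence of~\cite{Taubes14_ZeroLoci_Arx}. For the lower bound it suffices to show $\cH^1(Z)>0$, since any set of Hausdorff dimension strictly less than $1$ has $\cH^1$-measure zero; but $\cH^1(Z)\ge C(a,g)>0$ is exactly part~\textit{\ref{It_HausdMeasBlowUpSet}}. Hence $\dim_{\cH} Z\ge 1$ as well, and the two inequalities force $\dim_{\cH} Z = 1$.

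The only genuine subtlety is that part~\textit{\ref{It_PD}} of \autoref{Prop_NaturalHomPD} requires the flow on $Z$ to be \emph{induced by a section} of $L$; one must check that the construction in \autoref{Lem_SectionDeterminesFlow} applies, which it does precisely because of the rescaling limit $(\cZ_*, s_*)$ of $(Z,s)$ furnished by \autoref{Lem_BlowUpIsLocGraphlike}, \textit{\ref{It_ZSrescLimits}}, together with the fact that $Z$ is locally graph-like. I expect this verification — ensuring the hypotheses of \autoref{Lem_SectionDeterminesFlow} and of \autoref{Prop_NaturalHomPD} are literally met — to be the main point requiring care; once it is in place the measure and dimension estimates are immediate.

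I note that the proof uses nothing about the Riemannian metric beyond its presence in the definition of $\cH^1$ and the hypothesis $a\neq 0$, consistent with the remark following \autoref{Thm_ZeroLocusPDdetLB} that no regularity of $Z$ or special metric is needed. For the generalization to $n\ge 3$, the analogous lower bound would follow the same route, using that a blow-up set for $n$ spinors is also one for $n=2$ by Proposition~0.1 of~\cite{HaydysWalpuski15_CompThm_GAFA}, so that the estimate reduces to the two-spinor case with the same determinant line bundle $L$.
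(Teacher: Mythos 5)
Your proposal is correct and follows essentially the same route as the paper: part (i) is obtained by combining \autoref{Lem_BlowUpIsLocGraphlike} with \autoref{Prop_HausdMeasDimOfZ} (the nonvanishing of $[Z,\theta,or]=\PD(c_1(L))=a$ being supplied by the flow induced by the section $s$, as in \autoref{Thm_ZeroLocusPDdetL_moreprecise}), and part (ii) follows by pairing the resulting lower bound $\dim_H Z\ge 1$ with the upper bound $\dim_H Z\le 1$ from \cite{Taubes14_ZeroLoci_Arx}. Your extra verification that the hypotheses of \autoref{Lem_SectionDeterminesFlow} and \autoref{Prop_NaturalHomPD} are met is exactly the bookkeeping the paper leaves implicit.
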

\begin{proof}
	Part~\textit{\ref{It_HausdMeasBlowUpSet}} follows from~\autoref{Lem_BlowUpIsLocGraphlike} and~\autoref{Prop_HausdMeasDimOfZ}. 
	Moreover,  \eqref{Eq_LowerBoundH1Z} shows in particular that the Hausdorff dimension $\dim_H Z$ of $Z$ is at least $1$.
	Combining this with $\dim_H Z\le 1$~\cite{Taubes14_ZeroLoci_Arx}*{Thm.1.3}, yields $\dim_H Z=1$.
\end{proof}

The following theorem follows directly  from~\autoref{Prop_NaturalHomPD} and~\autoref{Lem_BlowUpIsLocGraphlike}.

\begin{thm}
	\label{Thm_ZeroLocusPDdetL_moreprecise}
	Let $Z$ be a blow-up set for the Seiberg--Witten equations with two spinors. 
	For any solution $(A,\Psi, 0)$ of~\eqref{Eq_nSW} over $M\setminus Z$ define $s\in C^0(M; L)$ by~\eqref{Eq_SectionS}.
	Let $(\theta, or)$ be a flow on $Z$ induced by $s$.
	Then
	\[
	[Z,\theta, or] = \PD \bigl (c_1(L)\bigr ),
	\]
	where $[Z,\theta, or] = \Gamma (Z,\theta, or)$, $L=\sL^2$ is the determinant line bundle, and $\PD$ stays for the Poincar\'{e} dual class.\qed
\end{thm}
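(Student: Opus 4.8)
The plan is to assemble \autoref{Thm_ZeroLocusPDdetL_moreprecise} from the structural results already established, so the proof is essentially a matter of verifying that all the hypotheses needed to invoke them are in place. First I would recall the construction of the continuous section: given a solution $(A,\Psi,0)$ of~\eqref{Eq_nSW} over $M\setminus Z$ with $n=2$, the compactness theorem of~\cite{HaydysWalpuski15_CompThm_GAFA} (Thm.\,1.5 together with Prop.\,0.1 of the Erratum) tells us that $A$ is flat with $\Z/2$ holonomy, hence the induced connection on $L=\sL^2$ is flat with trivial holonomy over $M\setminus Z$; picking a parallel section $s_0$ of $L$ there, the function $|\Psi|$ extends continuously over all of $M$ with $Z=|\Psi|^{-1}(0)$, so $s:=|\Psi|\cdot s_0\in C^0(M;L)$ is globally defined with $s^{-1}(0)=Z$. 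This is precisely~\eqref{Eq_SectionS}.

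Next I would verify that $Z$ together with $s$ falls into the framework of~\autoref{Prop_NaturalHomPD}. By~\autoref{Lem_BlowUpIsLocGraphlike}, \textit{\ref{It_BlowUpSetLocGraphlike}}, a blow-up set for the Seiberg--Witten equations with $n$ spinors is a compact locally graph-like set — and since $Z$ is a blow-up set it is automatically compact — so the first hypothesis of~\autoref{Prop_NaturalHomPD} is satisfied. By~\autoref{Lem_BlowUpIsLocGraphlike}, \textit{\ref{It_ZSrescLimits}}, for $n=2$ the pair $(Z,s)$ admits a rescaling limit $(\cZ_*,s_*)$ at every point $z\in Z$; hence~\autoref{Lem_SectionDeterminesFlow} applies and yields that the collection $(\theta,or)$ induced by $s$ — obtained by applying~\autoref{Prop_ZeroLocusEmbeddedGraph}, \textit{\ref{It_FlowOnZeroLocusIfGraph}}, to each $s_*$ — is genuinely a flow on $Z$ in the sense of~\autoref{Defn_FlowGeneral}. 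This is the content needed to even make sense of $\Gamma(Z,\theta,or)$.

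Finally I would invoke~\autoref{Prop_NaturalHomPD}, \textit{\ref{It_PD}}: since $(\theta,or)$ is induced by the section $s$ of the line bundle $L$, we obtain $[Z,\theta,or]=\Gamma(Z,\theta,or)=\PD(c_1(L))$, which is exactly the asserted equality with $L=\sL^2$. Altogether the argument is just the chain
\[
[Z,\theta,or]\ \overset{\text{def}}{=}\ \Gamma(Z,\theta,or)\ \overset{\autoref{Prop_NaturalHomPD}}{=}\ \PD(c_1(L)).
\]
I do not expect any genuine obstacle here: all the hard work — the rescaling-limit analysis drawn from~\cite{Taubes14_ZeroLoci_Arx}, the flatness of $A$, the construction of the collapsing map $f$ in~\autoref{Lem_ContractingMap}, and the well-definedness of $\Gamma$ independent of $f$ — has already been carried out in the preceding results. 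The only point requiring a moment's care is confirming that the flow $(\theta,or)$ appearing in the statement is \emph{the same} object that~\autoref{Lem_SectionDeterminesFlow} produces, i.e.\ that ``induced by $s$'' is used consistently; this is immediate from the definitions, so the proof reduces to citing~\autoref{Prop_NaturalHomPD} and~\autoref{Lem_BlowUpIsLocGraphlike}, as the sentence preceding the theorem already announces.
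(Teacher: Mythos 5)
Your proposal is correct and follows exactly the paper's route: the paper proves this theorem by simply combining the section construction~\eqref{Eq_SectionS} with \autoref{Lem_BlowUpIsLocGraphlike} (compact locally graph-like, rescaling limits of $(Z,s)$) and then citing \autoref{Prop_NaturalHomPD}, \textit{\ref{It_PD}}, which is precisely your chain of reasoning. Your added remark that \autoref{Lem_SectionDeterminesFlow} is what makes ``induced by $s$'' a genuine flow is a correct (and implicit in the paper) clarification, not a deviation.
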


Theorem~\ref{Thm_ZeroLocusPDdetLB} implies  that there are restrictions for $\Z/2\Z$ harmonic spinors, which can be lifted to a solution of~\eqref{Eq_nSW} with $\tau=0$.
Namely, let $Z$ be an arbitrary locally graph-like subset of 
$M$.
By applying a map $f\simeq id_M$ if necessary, we can assume that $Z$ is an embedded graph.
Denote
\[
\Lambda=\Lambda(Z):=\Im \bigl ( \Gamma\colon\Flow (Z)\to H_1(M;\Z)  \bigr) .
\]
For instance, if $Z$ is a smooth connected oriented curve, then $\Lambda(Z)=\Z [Z]$.

\begin{proposition}\label{Prop_TopRestrBlowUpSet}
Let $(\psi, Z)$ be a $\Z/2$-harmonic spinor. 
If\ \, $\PD(c_1(L))\notin\Lambda(Z)$, then $(\psi, Z)$ can not appear as the limit of a sequence of the Seiberg--Witten monopoles with two spinors for any $\Spin^c$-structure, whose determinant line bundle is $L$.\qed
\end{proposition}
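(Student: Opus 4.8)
The plan is to argue by contraposition, directly unwinding the definitions. Suppose $(\psi, Z)$ \emph{does} appear as the limit of a sequence of Seiberg--Witten monopoles with two spinors for some $\Spin^c$-structure whose determinant line bundle is $L$; I must then show $\PD(c_1(L)) \in \Lambda(Z)$. By definition of a limit of monopoles together with Remark~\ref{Rem_W12forLimits}, the limiting configuration $(A,\Psi,0)$ is a solution of~\eqref{Eq_nSW} over $M \setminus Z$ satisfying Condition~\textit{\ref{It_NablaPsi}} of Definition~\ref{Defn_BlowUpSet}, and since $|\Psi| = |\psi|$ extends continuously over $M$ with $Z = |\Psi|^{-1}(0)$, the set $Z$ is a blow-up set for the equation with two spinors in the sense of Definition~\ref{Defn_BlowUpSet}.

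Next I would invoke the machinery already assembled. By~\autoref{Lem_BlowUpIsLocGraphlike}, $Z$ is a compact locally graph-like set and the pair $(Z,s)$ — with $s = |\Psi| \cdot s_0$ as in~\eqref{Eq_SectionS} — admits a rescaling limit at every point of $Z$. Hence by~\autoref{Lem_SectionDeterminesFlow} the section $s$ induces a flow $(\theta, or) \in \Flow(Z)$, and by~\autoref{Prop_NaturalHomPD}, \textit{\ref{It_PD}} (equivalently by~\autoref{Thm_ZeroLocusPDdetL_moreprecise}) we have $\Gamma(Z, \theta, or) = [Z,\theta,or] = \PD(c_1(L))$. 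In particular $\PD(c_1(L))$ lies in the image of $\Gamma \colon \Flow(Z) \to H_1(M;\Z)$, i.e. $\PD(c_1(L)) \in \Lambda(Z)$, contradicting the hypothesis.

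One small point needs attention: $\Lambda(Z)$ was defined after first replacing $Z$ by an embedded graph via a map $f \simeq id_M$, whereas the flow produced above lives on the original locally graph-like $Z$. This is reconciled by~\autoref{Lem_ContractingMap}: the homomorphism $f_* \colon \Flow(Z) \to \Flow(G)$ together with the homotopy $f \simeq id_M$ ensures $\Gamma(Z,\theta,or) = \Gamma(G, f_*(\theta, or))$, so the image of $\Gamma$ is the same whether computed on $Z$ or on $G = f(Z)$; thus $\Lambda(Z)$ is well defined and the class $\PD(c_1(L))$ indeed belongs to it. I expect no serious obstacle here — the statement is essentially a repackaging of~\autoref{Thm_ZeroLocusPDdetL_moreprecise} as an obstruction, and the only thing to be careful about is this bookkeeping between $Z$ and its graph model, which~\autoref{Lem_ContractingMap} was designed precisely to handle.
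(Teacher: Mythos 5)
Your argument is correct and is exactly the paper's (the proposition is stated with a \qed precisely because it is an immediate consequence of \autoref{Thm_ZeroLocusPDdetL_moreprecise} together with the definition of $\Lambda(Z)$ via the graph model from \autoref{Lem_ContractingMap}). The contrapositive unwinding — limit gives a blow-up set, the induced flow satisfies $[Z,\theta,or]=\PD(c_1(L))$, hence $\PD(c_1(L))\in\Lambda(Z)$ — is the intended reasoning.
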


\begin{example}
To obtain an example of a $\Z/2$-harmonic spinor with a non-trivial subgroup $\Lambda$, consider a harmonic spinor  $\psi$ on a Riemann surface $\Sigma$ with a non-empty zero locus, which is necessarily a finite collection of points.
Viewing $\psi$ as a harmonic spinor on $M=\Sigma\times S^1$ equipped with the product metric, we obtain that the corresponding zero locus consists of finitely many copies of $\{pt \}\times S^1$,
 which implies that $\Lambda(\psi)= H_1(S^1)\subset H_1(\Sigma)\oplus H_1(S^1)=H_1(M)$. 

In particular, this yields the following: Choose any $\Spin^c$-structure, whose determinant line bundle is not the pull-back of a line bundle on $\Sigma$.
Then $\psi$ can not appear as the limit of a sequence of the Seiberg--Witten monopoles on $\Sigma\times S^1$ with two spinors for this choice of a $\Spin^c$-structure.   
\end{example}

\medskip

Let us turn to the general case, i.e., $n\ge 2$. 
Notice first that $E$ admits a topological trivialization, since the structure group of $E$ is $\SU(n)$ and the base manifold is three-dimensional.
It is convenient to pick such a trivialization thus identifying $E$ with the product bundle $\underline\C^n$.
Notice that $E$ may be equipped with a nontrivial background connection $B$, however this will not have any significance for the upcoming discussion.

Recall that the quadratic map $\mu$ appearing in \eqref{Eq_nSW} is obtained from the following algebraic map 
\[
\mu\colon \Hom ( {\C^{n}}, {\C^2})\to i\su(2), \qquad 
\mu(B)= BB^*-\frac 12|B|^2,
\]
which is denoted by the same letter. 
The $\U(n)$-action $A\cdot B=BA^*$ on the domain of $\mu$ combined with the action of $\R_{>0}$ by dilations yields a transitive action on $\mu^{-1}(0)\setminus \{ 0 \}$.
Observing that the stabilizer of a point, say the projection onto the first two components, is $\U(n-2)$, we obtain  
$\mu^{-1}(0)\setminus\{ 0 \}\cong \R_{>0}\times \frac {\U(n)}{\U(n-2)}$.

Furthermore, letting $\U(1)$ act as the center of $\U(n)$, we obtain a diffeomorphism 
\[
\mathring M_{1,n}=\mu^{-1}(0)\setminus\{ 0\} /\U(1)\cong \R_{>0}\times \frac{\U(n)}{U(1)\times U(n-2)}.
\] 
This yields a projection 
\begin{equation}\label{Eq_ProjM1nToGr}
\zeta\colon\mathring M_{1,n}\to\Gr_{n-2}(\C^n),
\end{equation}
which in fact represents $\mathring M_{1,n}$ as the total space of a fiber bundle over $\Gr_{n-2}(\C^n)$ with the fiber $\C^2\setminus 0/\pm 1$. 
Notice also that by viewing $\C^2$ as the tautological  $\SU(2)$-representation, we obtain an action of $\SU(2)$  on $\mathring M_{1,n}$. 
Moreover, the induced action on $\Gr_{n-2}(\C^n)$ is trivial.

Denote
\begin{equation}
	\label{Eq_Bundle_frakM}
\fM:=\SU(\slS)\times_{\SU(2)} \mathring M_{1,n}\to M.
\end{equation}
To each $\fI\in \Gamma(\fM)$ defined on a subset of $M,$ say $M\setminus Z$, with the help of \eqref{Eq_ProjM1nToGr} we can associate a map $\Phi_0\colon M\setminus Z\to \Gr_{n-2}(\C^n)$, namely $\Phi_0=\zeta\comp \fI$.

Assume $\fI$ admits a lift $\Psi$, i.e., $\Psi$ is a section of $\Hom (\underline{\C}^n, \slS\otimes\sL)$  which vanishes nowhere on $M\setminus Z$  and satisfies $\mu\comp\Psi=0$,  $\pi\comp\Psi=\fI$, where $\pi\colon\mu^{-1}(0)\setminus \{ 0 \}\to \mathring M_{1,n}$ is the natural projection. 
Under these circumstances the map $\Phi_0$ can be described as follows.
The equation $\mu(\Psi)=0$ implies that for each $m\in M\setminus Z$
the homomorphism $\Psi_m$ is surjective and therefore 
\begin{equation}\label{Eq_AssocMapToGr2}
\Phi_0\colon M\setminus Z\to \Gr_{n-2}(\C^n), \qquad \Phi_0(m)=\ker\Psi_m
\end{equation}
is well-defined and coincides with $\zeta\comp\fI$. 
Notice that over $M\setminus Z$ we have  the short exact sequence $0\to\Phi_0^*S\to\underline\C^n\to\slS\otimes\sL\to 0$, which implies $L|_{M\setminus Z}\cong \Phi_0^*\det \chk S$, where $S$ denotes the tautological bundle over $\Gr_{n-2}(\C^n)$.
\begin{remark}
$\mathring M_{1,n}$ can be identified with the framed moduli space of centered charge one $\SU(n)$-instantons on $\R^4$, see for example \cite{DonaldsonKronheimer:90}*{Sect.\,3.3}.
Then $\fM$ can be thought of as the bundle whose fiber at $m$ is $\mathring M_{1,n}(\slS_m)$.
\end{remark}

\begin{lem}\label{Lem_SeparatingZandExtraZeros}
Let $(A,\Psi,0)$ be a solution of \eqref{Eq_nSW} over $M\setminus Z$ such that $|\Psi|$ has a continuous extension to $M$ and $Z=|\Psi|^{-1}(0)$. 
Then the following holds:
\begin{enumerate}[(i), itemsep=2pt, topsep=4pt]
\item\label{It_LtrivialOverU} 
There is a neighbourhood $U$ of $Z$ such that $L$ is trivial over $U$;
\item\label{It_IntersNumberVanishes} 
For any closed oriented surface $\Sigma\subset U\setminus Z$ we have $\langle \Phi_{0*}[\Sigma], c_1(\det \chk S)\rangle =0$.
\end{enumerate} 

\end{lem}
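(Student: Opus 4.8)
The plan is to deduce both statements from the structure of $Z$ as a locally graph-like set together with the relation $L|_{M\setminus Z}\cong\Phi_0^*\det\chk S$ established just before the lemma. First I would address \textit{\ref{It_LtrivialOverU}}. Pick any neighbourhood $V$ of $Z$ as in Remark~\ref{Rem_NdhdContrOnGraph}: by~\autoref{Lem_BlowUpIsLocGraphlike} the set $Z$ is compact locally graph-like, so by~\autoref{Lem_ContractingMap} (and the refinement in Remark~\ref{Rem_NdhdContrOnGraph}) one can choose $V$ retracting onto an embedded graph $G=f(Z)$, with $\partial\overline V$ a smooth embedded surface. Then $H^2(\overline V;\Z)\cong H^2(G;\Z)=0$ since $G$ is a $1$-complex, and as $c_1$ of a complex line bundle lives in $H^2$, the restriction of $L$ to $\overline V$ has vanishing first Chern class; over a space homotopy equivalent to a graph a line bundle is classified by its $c_1$, hence $L|_V$ is trivial. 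Taking $U$ to be this $V$ proves \textit{\ref{It_LtrivialOverU}}.

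For \textit{\ref{It_IntersNumberVanishes}}, let $\Sigma\subset U\setminus Z$ be a closed oriented surface. Over $M\setminus Z$ we have $\Phi_0^*\det\chk S\cong L|_{M\setminus Z}$, so
\[
\langle\Phi_{0*}[\Sigma],c_1(\det\chk S)\rangle=\langle[\Sigma],\Phi_0^*c_1(\det\chk S)\rangle=\langle[\Sigma],c_1(L|_{U\setminus Z})\rangle.
\]
But $\Sigma\subset U$ and $L|_U$ is trivial by part~\textit{\ref{It_LtrivialOverU}}, so $c_1(L|_U)=0$ and the pairing vanishes. This is the whole argument; the only thing to be careful about is that $\Phi_0$ is only defined on $M\setminus Z$, which is why $\Sigma$ is assumed to avoid $Z$ and why we first localize the triviality statement to a neighbourhood of $Z$ rather than invoking it on all of $U\setminus Z$ directly.

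The step I expect to require the most care is~\textit{\ref{It_LtrivialOverU}}, specifically the passage from $\cH^1$-smallness/graph-likeness of $Z$ to the existence of a genuinely contractible-in-$H^2$ neighbourhood: one must invoke~\autoref{Lem_ContractingMap} and its Remark to produce $V\simeq G$ with $G$ an embedded graph, so that $H^2(V;\Z)=0$. Once that is in hand, the vanishing of $c_1(L|_V)$ and hence triviality of $L|_V$ is standard, and part~\textit{\ref{It_IntersNumberVanishes}} is a one-line naturality computation. It may be worth remarking that \textit{\ref{It_IntersNumberVanishes}} can alternatively be phrased as saying that $\Phi_0\colon U\setminus Z\to\Gr_{n-2}(\C^n)$ factors, up to the relevant cohomology, through the trivialization of $L$, but the argument above is the most economical.
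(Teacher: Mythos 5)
Your proof is correct and follows essentially the same route as the paper: reduce to the graph structure of $Z$ via \autoref{Lem_BlowUpIsLocGraphlike} and \autoref{Lem_ContractingMap}, get triviality of $L$ near $Z$ from the vanishing of $H^2$ of a graph neighbourhood, and deduce \textit{(ii)} from $L|_{M\setminus Z}\cong\Phi_0^*\det\chk S$ by naturality. The only (immaterial) difference is that the paper takes a regular neighbourhood $V$ of the embedded graph $f(Z)$ and pulls back the trivialization by $f\simeq id_M$ to $U=f^{-1}(V)$, whereas you invoke Remark~\ref{Rem_NdhdContrOnGraph} to make the neighbourhood of $Z$ itself homotopy equivalent to the graph.
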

\begin{proof}
By \autoref{Lem_BlowUpIsLocGraphlike}~\textit{\ref{It_BlowUpSetLocGraphlike}},  $Z$ is a compact locally graph-like subset of $M$.
Hence, there is $f\simeq id_M$ such that $f(Z)$ is an embedded graph. 
Then $L$ must be trivial over a regular neighborhood $V$ of $f(Z)$.
Therefore, $f^*L\cong L$ is trivial over $U:=f^{-1}(V)$. 
This proves \textit{\ref{It_LtrivialOverU}}. 
Part~\textit{\ref{It_IntersNumberVanishes}} follows immediately from  \textit{\ref{It_LtrivialOverU}} taking into account that $L|_{M\setminus Z}\cong \Phi^*\det \chk S$. 
\end{proof}

Recall \cite{HaydysWalpuski15_CompThm_GAFA}*{App.\;A} that each solution $(A,\Psi,0)$ of \eqref{Eq_nSW}  determines a  Fueter section of $\fM$ on the complement of the zero locus of $\Psi$.
Given a pair $(\fI, Z)$, where $\fI$ is a Fueter section of $\fM$ defined on the complement of $Z\subset M$, one can ask whether $(\fI, Z)$ can be obtained from a solution of \eqref{Eq_nSW} with $\tau=0$. 
This situation is considered in the following corollary.

\begin{corollary}
Let $\fI$ be a Fueter section of $\fM$ over $M\setminus Z$, where $Z=\bigsqcup Z_i$ is a link in $M$ and each $Z_i\cong S^1$. 
Let $U_i$ be a tubular neigborhood of $Z_i$. 
Denote $a_i:=\deg\bigl ( \Phi_0^*\det\chk S|_{\partial U_i}\bigr )$.
If $a(\fI, Z):=\sum_i a_i^2\neq 0$, then $\fI$ can not be lifted to a solution of \eqref{Eq_nSW} over $M\setminus Z$ with $\tau=0$. \qed
\end{corollary}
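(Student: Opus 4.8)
The plan is to argue by contradiction: I will assume $\fI$ admits a lift to a solution $(A,\Psi,0)$ of \eqref{Eq_nSW} over $M\setminus Z$ with $\tau=0$, and show that this forces $a_i=0$ for \emph{every} $i$, contradicting $a(\fI,Z)=\sum_i a_i^2\neq 0$ (note that over $\Z$ this hypothesis just says that some $a_i$ is nonzero).

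First I would unwind what a lift provides. By definition a lift $\Psi$ is a \emph{nowhere vanishing} section of $\Hom(\underline\C^n,\slS\otimes\sL)$ over $M\setminus Z$ with $\mu\comp\Psi=0$ and $\pi\comp\Psi=\fI$; in particular $\Phi_0=\zeta\comp\fI$ is the map $m\mapsto\ker\Psi_m$ of \eqref{Eq_AssocMapToGr2}. Surjectivity of each $\Psi_m$, which is exactly the content of $\mu(\Psi)=0$, then yields the short exact sequence $0\to\Phi_0^*S\to\underline\C^n\to\slS\otimes\sL\to 0$ over \emph{all} of $M\setminus Z$, and hence an isomorphism of line bundles $L|_{M\setminus Z}\cong\Phi_0^*\det\chk S$. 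The remaining data in \eqref{Eq_nSW} (the Dirac equation, the $L^2$ normalization) plays no role here; the only input is the pointwise algebraic identity $\mu\comp\Psi=0$.

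Next I would compute $a_i$ homologically. Since $Z_i\cong S^1$ is an embedded circle in the oriented three-manifold $M$, its tubular neighbourhood $\overline{U_i}$ is a solid torus, so $\partial U_i\cong T^2$ and, crucially, $\partial U_i=\partial\overline{U_i}$ bounds in $M$, i.e.\ $\iota_{i*}[\partial U_i]=0$ in $H_2(M;\Z)$ for the inclusion $\iota_i\colon\partial U_i\hookrightarrow M$. As $\partial U_i\subset M\setminus Z$, the isomorphism from the previous step gives $a_i=\deg\bigl(\Phi_0^*\det\chk S|_{\partial U_i}\bigr)=\langle c_1(L),\iota_{i*}[\partial U_i]\rangle=0$. (Alternatively: a regular neighbourhood $V$ of $Z$ retracts onto the link, so $H^2(V;\Z)=0$ and $L|_V$ is trivial; shrinking $U_i$ so that $\partial U_i\subset V\setminus Z$, which does not change $a_i$ by homotopy invariance of the degree, then gives the same conclusion as in \autoref{Lem_SeparatingZandExtraZeros}\,\textit{\ref{It_IntersNumberVanishes}}.) Summing over $i$ yields $a(\fI,Z)=\sum_i a_i^2=0$, the desired contradiction.

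In truth there is no serious obstacle once \eqref{Eq_AssocMapToGr2} and the resulting line-bundle isomorphism are in hand — the statement then reduces to the fact that a boundary torus is null-homologous. The single point deserving care is that a lift must be nowhere vanishing on \emph{all} of $M\setminus Z$, so that $L|_{M\setminus Z}\cong\Phi_0^*\det\chk S$ holds near each $\partial U_i$ and not merely away from some larger zero set; this is precisely why the argument applies to genuine lifts rather than to arbitrary solutions of \eqref{Eq_nSW} with possibly bigger zero locus. As a consistency check, for $n=2$ one has $\Gr_0(\C^2)=\mathrm{pt}$, so $\det\chk S$ is trivial and every $a_i$ vanishes automatically, matching the fact that the hypothesis $a(\fI,Z)\neq 0$ is vacuous in that case.
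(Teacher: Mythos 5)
Your proof is correct and follows essentially the paper's route: the corollary is a direct consequence of \autoref{Lem_SeparatingZandExtraZeros}, whose key input you reproduce — the nowhere-vanishing lift with $\mu\comp\Psi=0$ gives the exact sequence $0\to\Phi_0^*S\to\underline\C^n\to\slS\otimes\sL\to 0$ and hence $L|_{M\setminus Z}\cong\Phi_0^*\det\chk S$, after which each $a_i$ vanishes because the boundary torus $\partial U_i$ bounds (equivalently, because $L$ is trivial near the link). Your observation that $[\partial U_i]=0$ in $H_2(M;\Z)$ is a harmless shortcut past the triviality-of-$L$-near-$Z$ step, and your remark about the $n=2$ case matches the paper's.
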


Given a link $Z$ in $M$, this corollary clearly yields restrictions on homotopy classes of sections of $\fM|_{M\setminus Z}$ representable by solutions of \eqref{Eq_nSW}.
At present, to the best of author's knowledge, the only known examples of Fueter sections with values in $\mathring M_{1,n}$ are those representable by solutions of \eqref{Eq_nSW}, see Section~\ref{Sect_ConstrFueter} below.
In particular, the question whether $a(\fI, Z)$ is a non-trivial obstruction for the existence of a solution of \eqref{Eq_nSW} with $\tau=0$ representing $(\fI, Z)$ remains open.

\medskip

In Theorem~\ref{Thm_ZisPDforLargeN} below we assume that $\Phi_0$ can be extended to all of $M$ as a continuous map. 
In particular, $\Phi_0^*\det\chk S$ is  trivial in a neighborhood of $Z$ so that obstructions based on  Lemma~\ref{Lem_SeparatingZandExtraZeros} must vanish.
In particular, for $n=2$ we have $\Gr_0(\C^2)=\{ pt \}$ so that the existence of $\Phi$ becomes trivial.

Notice that by \cite{HaydysWalpuski15_CompThm_GAFA}*{Prop.\,0.1} for any solution $(A,\Psi, 0)$ on $M\setminus Z$ we can construct a solution of the Seiberg--Witten equation with two spinors, say $(\hat A,\hat\Psi, 0)$, on $M\setminus Z$.
More precisely, $\hat\Psi$ is a homomorphism from a trivial rank two bundle into $\slS\otimes\sL\otimes \sK^{1/2}$, where $\sK=\det\bigl (\ker\Psi\bigr )^\perp\cong \Phi_0^* \det S$.
Denote by $(\theta, or)$ the flow on $Z$ associated with $(\hat A,\hat\Psi, 0)$ as in~\autoref{Thm_ZeroLocusPDdetL_moreprecise}.

\begin{thm}\label{Thm_ZisPDforLargeN}
	Let $Z$ be a blow-up set for the Seiberg--Witten equations with multiple spinors. 
Let $(A,\Psi,0)$ be a corresponding solution of \eqref{Eq_nSW} over $M\setminus Z$.
Assume the associated map $\Phi_0$ given by  \eqref{Eq_AssocMapToGr2} admits a continuous extension $\Phi\colon M\to \Gr_{n-2}(\C^n)$.
Then $(A,\Psi, 0)$ induces a flow $(\theta, or)$ on  $Z$  such that 
\[
[Z,\theta, or]=\PD(c_1(L))+ \PD(c_1(\Phi^*\det S)).
\]
\end{thm}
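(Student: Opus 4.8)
The plan is to reduce \autoref{Thm_ZisPDforLargeN} to the case of two spinors, which is already handled by \autoref{Thm_ZeroLocusPDdetL_moreprecise}, by carefully keeping track of the twisting line bundle $\sK^{1/2}$ introduced in the reduction from $n\ge 3$ to $n=2$. First I would invoke \cite{HaydysWalpuski15_CompThm_GAFA}*{Prop.\,0.1} to obtain the solution $(\hat A,\hat\Psi, 0)$ of the Seiberg--Witten equation with two spinors over $M\setminus Z$, where $\hat\Psi$ is a homomorphism from a trivial rank two bundle into $\slS\otimes\sL\otimes\sK^{1/2}$ with $\sK=\det(\ker\Psi)^\perp\cong\Phi_0^*\det S$. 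The key observation is that the determinant line bundle for this auxiliary equation is not $L=\sL^2$ but rather $\hat L:=(\sL\otimes\sK^{1/2})^2=\sL^2\otimes\sK=L\otimes\Phi_0^*\det S$, since $\Phi_0$ extends to $\Phi$ by hypothesis and hence $\sK$ is globally defined and $\sK\cong\Phi^*\det S$. Applying \autoref{Thm_ZeroLocusPDdetL_moreprecise} to $(\hat A,\hat\Psi, 0)$ and the determinant line bundle $\hat L$ yields a flow $(\theta, or)$ on $Z$ with
\[
[Z,\theta, or]=\PD(c_1(\hat L))=\PD(c_1(L))+\PD(c_1(\Phi^*\det S)),
\]
which is exactly the asserted formula.

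The one point that needs care is the definition of the flow $(\theta, or)$ attached to the \emph{original} solution $(A,\Psi, 0)$: I would simply \emph{define} it to be the flow induced by the continuous section $\hat s:=|\hat\Psi|\cdot\hat s_0$ of $\hat L$ as in \eqref{Eq_SectionS}, where $\hat s_0$ is a parallel section of $\hat L$ over $M\setminus Z$ (the holonomy of the induced connection on $\hat L$ is again trivial by \cite{HaydysWalpuski15_CompThm_GAFA}*{Thm.\,1.5} applied to $(\hat A,\hat\Psi, 0)$). For this to make sense I must check that $\hat L$ is in fact trivial over a neighborhood of $Z$ --- but this follows from \autoref{Lem_SeparatingZandExtraZeros}~\textit{\ref{It_LtrivialOverU}} together with the fact that $\Phi^*\det S$ is trivial near $Z$ (since $\Phi_0^*\det S\cong\sK$ is, by the hypothesis that $\Phi_0$ extends and by the exact sequence $0\to\Phi_0^*S\to\underline\C^n\to\slS\otimes\sL\to 0$, so that $|\hat\Psi|^{-1}(0)=Z$ and the construction of \eqref{Eq_SectionS} applies verbatim with $\hat L$ in place of $L$). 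It also requires that $|\hat\Psi|$ extends continuously to $M$ with zero locus exactly $Z$; this is part of \cite{HaydysWalpuski15_CompThm_GAFA}*{Prop.\,0.1}, since the zero locus of $\hat\Psi$ coincides with that of $\Psi$.

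Finally I would verify that the reduction respects \autoref{Lem_BlowUpIsLocGraphlike}, i.e.\ that $(Z,\hat s)$ admits a rescaling limit at every point of $Z$, so that \autoref{Thm_ZeroLocusPDdetL_moreprecise} genuinely applies. This is immediate: $Z$ is locally graph-like by \autoref{Lem_BlowUpIsLocGraphlike}~\textit{\ref{It_BlowUpSetLocGraphlike}} (which holds for all $n$), and the rescaling-limit argument of \autoref{Lem_BlowUpIsLocGraphlike}~\textit{\ref{It_ZSrescLimits}} only uses that $A$ is flat with finite holonomy and that $|\Psi|$ is controlled near $Z$ by Taubes' analysis --- all of which hold for the pair $(\hat A,\hat\Psi)$ as well. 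The main obstacle is purely bookkeeping: making sure the square-root bundle $\sK^{1/2}$, which is only locally defined, contributes precisely $\PD(c_1(\sK))=\PD(c_1(\Phi^*\det S))$ and not, say, half of it, which is why it is essential that the determinant line bundle is the \emph{square} $(\sL\otimes\sK^{1/2})^2$ so that the ambiguous square root cancels. With that identification in hand the theorem follows directly from \autoref{Prop_NaturalHomPD} and \autoref{Thm_ZeroLocusPDdetL_moreprecise} applied to the two-spinor solution $(\hat A,\hat\Psi, 0)$.
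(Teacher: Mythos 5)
Your proposal is correct and follows essentially the same route as the paper: reduce to the two-spinor case via \cite{HaydysWalpuski15_CompThm_GAFA}*{Prop.\,0.1}, observe that the relevant determinant line bundle is $(\sL\otimes\sK^{1/2})^2\cong L\otimes\Phi^*\det S$ (so the locally defined square root causes no trouble), realize $Z$ as the zero locus of a continuous section of this bundle as in \eqref{Eq_SectionS}, check rescaling limits as in \autoref{Lem_BlowUpIsLocGraphlike}~\textit{\ref{It_ZSrescLimits}}, and conclude via \autoref{Prop_NaturalHomPD}. The only cosmetic difference is that you phrase the conclusion as an application of \autoref{Thm_ZeroLocusPDdetL_moreprecise} to the reduced solution, whereas the paper appeals directly to \autoref{Prop_NaturalHomPD}; the substance is identical.
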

\begin{proof}

The hypothesis of this theorem implies that $\Phi^*\det S$ is an extension of $\sK$ to $M$; 
Then, arguing just like at the beginning of this section, we can show that $Z$ is the zero locus of a continuous section $s$ of $\sL^2\otimes\Phi^*\det S\cong L\otimes \Phi^*\det S$; 
Moreover, similar arguments to the ones used in the proof of~\autoref{Lem_BlowUpIsLocGraphlike}~\textit{\ref{It_ZSrescLimits}} show that $(Z,s)$ has rescaling limits at each point. 
The proof of this theorem is finished by appealing to~\autoref{Prop_NaturalHomPD}.  
\end{proof}

\begin{rem}
	Even though a trivialization of $E$ simplifies somewhat the discusssion above, it may be also of interest to outline necessary modifications in the case when $E$ is not trivialized. 
	For example, this may be of interest in the four-dimensional setting.
	
	Thus, denote $Q=\SU(E)\times \SU (\slS)$ the principal $G:=\SU(n)\times \SU(2)$--bundle. 
	Notice that $\SU(n)$ acts on $\mathring M_{1,n}$ by the change of frame and we can define a ``twisted version'' of~\eqref{Eq_Bundle_frakM} by
	\[
	\fM:=Q\times_G \mathring M_{1,n}\to M.
	\] 
	The map $\zeta$ is $\SU(n)$--equivariant, which implies that we have a projection
	\[
	\fM \to \Gr_{n-2}(E),
	\]
	where $\Gr_{n-2}(E)$ is the bundle over $M$ with the fiber $\Gr_{n-2}(E_m)$ at $m\in M$.
	The total space of $\Gr_{n-2}(E)$ is equipped with the ``tautological vector bundle'' $S$, whose pull-back to $\Gr_{n-2}(E_m)$ is the tautological vector bundle of this Grassman manifold.
	Under these circumstances, $\Phi_0$ is interpreted as a section of  $\Gr_{n-2}(E)\to M$.
	The rest goes through essentially without any modifications.
\end{rem}

\section{A construction of Fueter sections with values in $\mathring M_{1,2n}$}\label{Sect_ConstrFueter}

Recall that $\slS$ denotes the spinor bundle associated with a fixed \Spin-structure on $M$.
Notice that $\slS$ is equipped with a quaternionic structure, i.e., a complex antilinear automorphism $J$ s.t. $J^2=-\mathrm{id}$.

\begin{proposition}\label{Prop_FuetSectFromHarmSpinors}
Let $\psi_1,\dots,\psi_n$ be harmonic spinors. 
Denote $\Psi=(\psi_1,J\psi_1,\dots,\psi_n, J\psi_n)$.
Then $(\Psi, \vartheta, 0)$ is a solution of~\eqref{Eq_nSW} for the following data: $\sL=\underline\C$ and $\vartheta$ denotes the product connection, $E=\underline\C^n$  and $B$ is also the product connection.
\end{proposition}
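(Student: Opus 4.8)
The plan is to verify the three lines of~\eqref{Eq_nSW} directly for the tuple $\Psi=(\psi_1,J\psi_1,\dots,\psi_n,J\psi_n)$, viewed as a homomorphism $\underline{\C}^{2n}\to\slS$ (recall $\sL=\underline{\C}$, so $\slS\otimes\sL=\slS$). The normalization $\|\Psi\|_{L^2}=1$ is not a genuine constraint: since $|J\psi|=|\psi|$ pointwise (as $J$ is a complex-antilinear \emph{isometry}), we have $|\Psi|^2=2\sum_i|\psi_i|^2$, which is not identically zero as long as the $\psi_i$ are not all trivial, so we may rescale $\Psi$ by a positive constant to arrange $\|\Psi\|_{L^2}=1$; this rescaling does not affect the other two equations because $\slD$ is linear and the curvature of the product connection is already zero. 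The Dirac equation $\slD_{A\otimes B}\Psi=0$ reduces, since $A\otimes B=\vartheta$ is the product connection, to $\slD\psi_i=0$ and $\slD(J\psi_i)=0$ for each $i$. The first holds by hypothesis; the second follows from the standard fact that the quaternionic structure $J$ on $\slS$ over an odd-dimensional manifold commutes with the Dirac operator, $\slD J=J\slD$, so $\slD(J\psi_i)=J\slD\psi_i=0$.

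The remaining equation is $\tau^2 F_A=\mu(\Psi)$ with $\tau=0$ (here $\vartheta$ plays the role of $\tau$; I would double-check the notation, but the intended reading is $\tau=0$ and $A=$ product connection), so $F_A=0$ automatically and it suffices to show $\mu(\Psi)=0$, i.e.\ $\Psi\Psi^*=\tfrac12|\Psi|^2\,\mathrm{id}_{\slS}$ at every point. Equivalently, writing $\Psi$ as the homomorphism with columns $\psi_1,J\psi_1,\dots,\psi_n,J\psi_n$, one has
\[
\Psi\Psi^*=\sum_{i=1}^n\bigl(\psi_i\psi_i^*+(J\psi_i)(J\psi_i)^*\bigr),
\]
so it is enough to prove the pointwise identity
\[
\psi\psi^*+(J\psi)(J\psi)^*=|\psi|^2\,\mathrm{id}_{\slS}
\]
for a single spinor $\psi\in\slS_m\cong\C^2$. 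This is the crux of the argument: at a point where $\psi\ne0$, the two vectors $\psi$ and $J\psi$ are orthogonal (because $\langle\psi,J\psi\rangle=\langle J(J\psi),J\psi\rangle=\overline{\langle J\psi,\psi\rangle}$ together with antilinearity of $J$ forces this inner product to be zero — I would carry out this short computation carefully, using that $J$ is complex-antilinear and unitary and $J^2=-\mathrm{id}$) and have equal norm $|\psi|$; hence $\{\psi/|\psi|,\,J\psi/|\psi|\}$ is an orthonormal basis of $\C^2$, and the sum of the two rank-one projectors onto its members is the identity scaled by $|\psi|^2$. At points where $\psi=0$ both sides vanish.

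Assembling the pieces: the pointwise identity above, summed over $i$, gives $\Psi\Psi^*=\tfrac12|\Psi|^2\,\mathrm{id}$ (the factor $\tfrac12$ matching the definition $\mu(B)=BB^*-\tfrac12|B|^2$ once one notes $|\Psi|^2=\operatorname{tr}(\Psi\Psi^*)=\sum_i 2|\psi_i|^2$), so $\mu(\Psi)=0$; combined with $F_\vartheta=0$ this yields the third equation of~\eqref{Eq_nSW} with $\tau=0$. Together with $\slD\Psi=0$ and the $L^2$-normalization this shows $(\Psi,\vartheta,0)$ (after the harmless constant rescaling) solves~\eqref{Eq_nSW}. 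I expect the main — indeed only — obstacle to be the linear-algebra verification that $\psi\perp J\psi$, which must be done with care about where the complex conjugation lands; everything else is bookkeeping and standard Dirac-operator facts.
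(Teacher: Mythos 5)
Your proof is correct and follows essentially the same route as the paper: the whole content is the pointwise identity $\psi\psi^*+(J\psi)(J\psi)^*=|\psi|^2\,\mathrm{id}$, which the paper obtains by expanding an arbitrary spinor in the basis $(\psi,J\psi)$ and you obtain by noting that $\psi$ and $J\psi$ are orthogonal of equal norm, so $\mu(\Psi)=0$ follows in both cases. You additionally spell out the harmless $L^2$-rescaling and the fact that $J$ is parallel and commutes with $\slD$ in dimension three, points the paper treats as standard and leaves implicit.
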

\begin{proof}
We only need to show that $\mu\comp\Psi=0$. 
It is in turn enough to prove this for $n=1$. 
Indeed, if $n=1$, by the definition of $\mu$ we have  
\begin{equation}
\label{Eq_Mu2}
\mu(\psi, J\psi)= \langle J\psi,\cdot \rangle J\psi +\langle \psi,\cdot \rangle\psi - |\psi|^2.
\end{equation}
Pick a point $m\in M$ and assume without loss of generality that $\psi(m)\neq 0$.
Then $(\psi(m), J\psi(m))$ is a complex basis of the fiber $\slS_m$ and any $\varphi\in \slS_m$ can be represented as
\[
\varphi = \frac 1{|\psi(m)|^2}\bigl ( \langle J\psi(m),\varphi \rangle J\psi(m) +\langle \psi(m),\varphi \rangle\psi(m) \bigr).
\]   
Substituting this in \eqref{Eq_Mu2}, we obtain $\mu(\psi, J\psi)=0$.
\end{proof}

Combining Proposition~\ref{Prop_FuetSectFromHarmSpinors} with \cite{HaydysWalpuski15_CompThm_GAFA}*{Prop.\,A.1}, we obtain the following result.

\begin{corollary}\label{Cor_FueterSectM12n}
Any $n$-tuple of harmonic spinors $(\psi_1,\dots, \psi_n)$ defines a Fueter--section $\fI$ with values in $\mathring M_{1,2n}$ away from the common zero locus  $Z:=\psi_1^{-1}(0)\cap\dots\cap\psi_n^{-1}(0)$.
\end{corollary}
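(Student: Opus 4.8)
The plan is to simply concatenate the two results already in hand. First I would invoke Proposition~\ref{Prop_FuetSectFromHarmSpinors}: given harmonic spinors $\psi_1,\dots,\psi_n$ on $M$, the tuple $\Psi=(\psi_1,J\psi_1,\dots,\psi_n,J\psi_n)$ together with the product connection $\vartheta$ (and product background connection $B$ on $E=\underline\C^n$) satisfies $\slD_{A\otimes B}\Psi=0$ and $\mu\comp\Psi=0$. The Dirac equation holds because each $\psi_j$ is harmonic and $J$ is parallel and anticommutes appropriately with the Clifford multiplication, so $J\psi_j$ is harmonic as well; the quadratic constraint $\mu(\Psi)=0$ is exactly the content of Proposition~\ref{Prop_FuetSectFromHarmSpinors}. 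Note that $\Psi$ here is a homomorphism $\underline\C^{2n}\to\slS$ (so we are in the case of $2n$ spinors, with $\sL=\underline\C$), which is why the target moduli space will be $\mathring M_{1,2n}$ rather than $\mathring M_{1,n}$.

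Next I would restrict attention to the open set $M\setminus Z$, where $Z=\psi_1^{-1}(0)\cap\dots\cap\psi_n^{-1}(0)$ is the common zero locus. Away from $Z$, at least one $\psi_j$ is nonzero, hence $(\psi_j, J\psi_j)$ is a complex basis of the fiber $\slS_m$, so $\Psi_m\colon\C^{2n}\to\slS_m$ is surjective; thus $\Psi$ is a nowhere-vanishing solution of $\mu\comp\Psi=0$ on $M\setminus Z$ and descends to a section $\fI$ of the bundle $\fM=\SU(\slS)\times_{\SU(2)}\mathring M_{1,2n}$ over $M\setminus Z$. Finally I would apply \cite{HaydysWalpuski15_CompThm_GAFA}*{Prop.\,A.1}, which states that any solution $(A,\Psi,0)$ of~\eqref{Eq_nSW} on the complement of the zero locus of $\Psi$ determines a Fueter section of $\fM$ there. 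Since $|\Psi|^{-1}(0)=Z$ by the surjectivity argument just given, the resulting Fueter section $\fI$ is defined precisely on $M\setminus Z$, which is the claim.

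There is essentially no obstacle here: the corollary is a bookkeeping statement combining Proposition~\ref{Prop_FuetSectFromHarmSpinors} (whose proof is the real content, and is already given) with a cited black box. The only point requiring a line of care is the identification of the zero locus of $\Psi$ with the common zero locus $Z$ of the $\psi_j$ — equivalently, that $\Psi_m$ fails to be surjective exactly when all $\psi_j(m)=0$ — together with keeping track that the rank of the domain bundle is $2n$, so that the structure group of the relevant charge-one instanton moduli space is $\SU(2n)$ and the fiber is $\mathring M_{1,2n}$.
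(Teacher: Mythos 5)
Your proposal is correct and follows exactly the paper's route: the paper's entire proof is the one-line combination of Proposition~\ref{Prop_FuetSectFromHarmSpinors} with \cite{HaydysWalpuski15_CompThm_GAFA}*{Prop.\,A.1}, which is precisely what you do. Your extra remarks — that $J\psi_j$ is harmonic, that surjectivity of $\Psi_m$ off $Z$ identifies $|\Psi|^{-1}(0)$ with $Z$, and that the domain has rank $2n$ so the fiber is $\mathring M_{1,2n}$ — are the correct bookkeeping the paper leaves implicit.
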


\begin{remark}
Recall that $\mathring M_{1,2}$ is isometric to $\C^2\setminus 0/\pm 1$ with its flat metric.
Hence, $\fM=\fM_{1,2}$ can be identified with $\slS\setminus 0/\pm 1$, where $0$ denotes the zero-section.
Tracing through the construction, one can see that the Fueter section of Corollary~\ref{Cor_FueterSectM12n} for $n=1$ is the $\Z/2$-harmonic spinor obtained by projecting a classical harmonic spinor.
\end{remark}

\medskip

In the remaining part of this section we construct examples of solutions of \eqref{Eq_nSW} with $\tau=0$ satisfying the hypotheses of Theorem~\ref{Thm_ZisPDforLargeN}.
To this end, pick a harmonic spinor $\psi$ and assume that its zero locus is a  smooth curve.
Choosing suitable coordinates around any fixed $m\in \psi^{-1}(0)$,  we can think of $\psi$ as a map $\R^3\to\C^2$ such that $\psi^{-1}(0)$ is the $x_3$-axis.
Assume in addition that the metric on $\R^3$ is the product metric with respect to the decomposition $\R^3=\R^2\times\R^1$.
It follows from the harmonicity of $\psi$ that along the $x_3$-axis  $\psi$ has an expansion of the form $\psi=\psi^{(N)}(x_3)w^N + o(|w|^N)$, where $w=x_1+ix_2$ and  $\psi^{(N)}(0)\neq 0$ with $1\le N<\infty$. 

Let $(\psi_1,\dots,\psi_n)$ be an $n$-tuple of harmonic spinors such that $Z:=\psi_1^{-1}(0)\cap\dots\cap\psi_n^{-1}(0)$ is a smooth curve.
By the above consideration, write $\psi_j=\psi_j^{(N_j)}(x_3)w^{N_j}+o(|w|^{N_j})$ and assume without loss of generality that $N_1=\min\{ N_j \}$.
Since
\[
\Phi_0(w,x_3)=\Bigl\{ y\in\C^n\mid \sum_{j=1}^n\bigl ( y_{2j-1}\psi_j(w,x_3)  + y_{2j}J\psi_j(w,x_3) \bigr )=0 \Bigr \}
\]
and $\bigl(\psi_1^{(N_1)}(0), J\psi_1^{(N_1)}(0)\bigr)$ is a basis of $\C^2$, the map $\Phi_0$ extends across the $x_3$-axis in a neighborhood of the origin.

Concrete examples can be constructed on $M=\Sigma\times S^1$ equipped with the product metric starting from a pair of harmonic spinors on $\Sigma$ with a common zero point.

\section{Concluding remarks}

As we have already seen in~\autoref{Lem_BlowUpIsLocGraphlike}, 
a blow-up set $Z$ for the Seiberg--Witten equation is locally graph-like. 
After the preliminary version of this preprint has been published, B.~Zhang \cite{Zhang17_Rectifiability_Arx} proved that the zero locus of a $\Z/2$ harmonic spinor is rectifiable (in dimension four).  
By~\cite{Taubes14_ZeroLoci_Arx}, $Z$ contains an open everywhere dense subset $I$, which is locally a subset of a Lipschitz graph. 
In particular, this implies that $H_1(B_r(z)\setminus Z;\; \Z)$ is either trivial or isomorphic to $\Z$ for all $z\in I$ provided $r$ is sufficiently small.
Hence, the construction of Section~\ref{Subsec_RectifCase} yields a multiplicity function $\theta$ on $I$ and an orientation on the subset where $\theta$ does not vanish.
If 
\begin{equation}
	\label{Eq_AuxH1ZI}
\cH^1(Z\setminus I)=0,
\end{equation}
 then this yields the structure of an integer multiplicity rectifiable current on $Z$.
However, at present it is not clear whether~\eqref{Eq_AuxH1ZI} (or, more generally, Condition~\ref{Hyp_LocHomGp}  on Page~\pageref{Hyp_LocHomGp}) holds.
Thus, the upshot of the above discussion is the following.
\begin{conj}
	Any blow-up set for the Seiberg--Witten equations with two spinors in dimension three admits a structure of an integer multiplicity rectifiable current, whose homology class is the Poincar\'{e} dual to the first Chern class of the determinant line bundle. 
\end{conj}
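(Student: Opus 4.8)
The plan is to verify that a blow-up set $Z$ for the Seiberg--Witten equation with two spinors satisfies Hypotheses~\ref{Hyp_ZlocGraphlike}--\ref{Hyp_Rectif} of Section~\ref{Subsec_RectifCase}, and then to feed this into~\autoref{Prop_ZisCurrent} and into the homology computation underlying~\autoref{Prop_NaturalHomPD}. Hypothesis~\ref{Hyp_ZlocGraphlike} is already established in~\autoref{Lem_BlowUpIsLocGraphlike}, and the section $s=|\Psi|\cdot s_0$ of~\eqref{Eq_SectionS} realises $Z$ as the zero locus of a continuous section of $L$ admitting a rescaling limit at every point. Hypothesis~\ref{Hyp_Rectif} — rectifiability of $Z$ together with $\cH^1(Z)<\infty$ — I would obtain from the rectifiability theorem for zero loci of $\Z/2$ harmonic spinors of~\cite{Zhang17_Rectifiability_Arx}, whose three-dimensional analogue should follow by the same, and in fact simpler, argument, supplemented by a monotonicity-formula argument of the type in~\cite{Taubes14_ZeroLoci_Arx} to bound $\cH^1$ locally, compactness of $M$ then making the bound global.

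The heart of the matter is Hypothesis~\ref{Hyp_LocHomGp}: for $\cH^1$-almost every $z\in Z$ one needs $H_1\bigl(B_r(z)\setminus Z;\Z\bigr)\cong\Z$ for all sufficiently small $r>0$. By~\cite{Taubes14_ZeroLoci_Arx} there is an open dense subset $I\subset Z$ along which $Z$ is locally contained in a Lipschitz graph, and at points of $I$ the punctured ball does have first homology $0$ or $\Z$; so the real task is to prove $\cH^1(Z\setminus I)=0$. Rectifiability alone will not suffice, since an approximate tangent line says nothing about the topology of $B_r(z)\setminus Z$. Instead I would try to prove an $\e$-regularity statement of roughly the following shape: if at some scale $\l$ about $z$ the rescaled pair $\bigl(\exp_\l^{-1}(Z),\exp_\l^*s\bigr)$ is $C^0$-close enough, on a fixed annulus, to a model pair whose zero set is a single line, then on a definite smaller ball $Z$ is an embedded Lipschitz arc. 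Combining such a statement with the (semi)continuity of the frequency and with the fact that a line is the rescaling limit at all but countably many points of $Z$, one would deduce that the ``bad'' set, on which the hypothesis of the $\e$-regularity statement fails, carries no $\cH^1$-mass. This $\e$-regularity step — essentially a quantitative blow-up and unique-continuation analysis for $\Z/2$ harmonic spinors — is the main obstacle, and explains why the statement is posed as a conjecture.

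Granting Hypotheses~\ref{Hyp_ZlocGraphlike}--\ref{Hyp_Rectif},~\autoref{Prop_ZisCurrent} gives that $(\vec Z,\theta,\xi)$ — with $\theta$ the multiplicity built from local degrees of $s$ and $\xi$ the induced unit tangent field on $\{\theta\neq 0\}$ — is an integer multiplicity rectifiable current without boundary, supported on a subset of $Z$. It then remains to identify its homology class with $\PD\bigl(c_1(L)\bigr)$. Since $M$ is compact, a closed integral current determines a class in $H_1(M;\Z)$, which I would compute by pushing the current forward under a map $f\simeq\mathrm{id}_M$ supplied by~\autoref{Lem_ContractingMap} that collapses a small neighbourhood of $Z$ onto an embedded graph $G$ and is the identity outside that neighbourhood. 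Because $f$ restricts to a homotopy equivalence of such a neighbourhood onto $G$ and is homotopic to the identity, $f_*(\vec Z,\theta,\xi)$ is homologous to the embedded-graph $1$-cycle determined by the flow $f_*(\theta,or)$ in the sense of~\autoref{Prop_HomolClassEmbGraph}, whose class equals $\Gamma(Z,\theta,or)=\PD\bigl(c_1(L)\bigr)$ by~\autoref{Prop_NaturalHomPD},~\textit{\ref{It_PD}} applied to $s$. One is left only to check that the flow $f_*(\theta,or)$ on $G$ coincides with the one induced by the pushed-forward section, which is immediate from the compatibility of local degrees under the collapse. This identifies the homology class and completes the proof.
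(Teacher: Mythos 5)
There is a fundamental mismatch here: the statement you were asked about is posed in the paper as a \emph{conjecture}, and the paper offers no proof of it — only the motivation you also recapitulate, namely that~\ref{Hyp_ZlocGraphlike} holds by~\autoref{Lem_BlowUpIsLocGraphlike}, that rectifiability in the spirit of~\ref{Hyp_Rectif} is known from~\cite{Zhang17_Rectifiability_Arx} (and only in dimension four there), and that by~\cite{Taubes14_ZeroLoci_Arx} the condition~\ref{Hyp_LocHomGp} holds on an open dense subset $I\subset Z$, so that Section~\ref{Subsec_RectifCase} produces a multiplicity and orientation on $I$. Your proposal is therefore not a proof but a research plan, and you say so yourself: the decisive step — showing that~\ref{Hyp_LocHomGp} holds $\cH^1$-almost everywhere, i.e.\ that the set where the local first homology of $B_r(z)\setminus Z$ fails to be $\Z$ has vanishing $\cH^1$-measure — is exactly the open problem, and the $\epsilon$-regularity statement you propose (closeness of the rescaled pair to a one-line model on an annulus forces $Z$ to be a Lipschitz arc on a smaller ball) is asserted, not proved; nothing in the paper or in the cited literature supplies it. You are right that rectifiability alone cannot give~\ref{Hyp_LocHomGp}, and identifying this as the obstruction is consistent with why the paper leaves the statement as a conjecture.

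Two further steps in your plan are also unestablished rather than routine. First, you import rectifiability and $\cH^1(Z)<\infty$ in dimension three from a four-dimensional theorem plus a hypothesized monotonicity argument ``of the type in~\cite{Taubes14_ZeroLoci_Arx}''; neither the dimensional reduction nor the local mass bound is carried out, and finiteness of $\cH^1(Z)$ is part of~\ref{Hyp_Rectif}, not a consequence of~\autoref{Lem_BlowUpIsLocGraphlike}. Second, the final identification of the homology class of the closed integral current $(\vec Z,\theta,\xi)$ with $\PD(c_1(L))$ via pushforward under the collapsing map $f$ needs more than ``compatibility of local degrees'': one must argue (e.g.\ via the constancy theorem on the edges of $G=f(Z)$, together with the fact that $\vec Z$ may omit a $\cH^1$-null and a $\theta=0$ part of $Z$) that $f_*(\vec Z,\theta,\xi)$ is precisely the graph cycle of the pushed flow, and this comparison between the current-theoretic class and $\Gamma(Z,\theta,or)$ is not contained in the paper. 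In short, the overall architecture you outline is the natural one and matches the paper's stated motivation, but the proposal does not prove the conjecture; its central analytic ingredient remains open.
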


Let $Z$ and $(A,\Psi, 0)$ be as in the setting of Definition~\ref{Defn_BlowUpSet}. 
As we have already noticed at the beginning of Section~\ref{Sect_BlowUpSet}, $A$ is flat provided $n=2$. 
Hence, if $(A,\Psi, 0)$ appears as the limit of a sequence $(A_k, \Psi_k,\tau_k)$ of the Seiberg--Witten monopoles with two spinors, then the corresponding sequence of curvatures $F_{A_k}$ converges to some sort of $\delta$-function supported on $Z$. 
More precisely, it seems likely that $Z$ admits a structure of an integer multiplicity rectifiable current, such that the sequence $F_{A_k}$ converges to this current. 
While the construction described in this preprint in general does not  produce a multiplicity function and an oriention with this property (since we may change a solution $(A, \Psi, 0)$ by a gauge transformation over $M\setminus Z$ and in general the resulting structure depends  on this gauge transformation), the author believes that the choice of gauge may be fixed to obtain the desired property and intends to study this question further. 

It is also worthwhile to note that in a special case of dimensionally reduced Seiberg--Witten equations  the convergence of the sequence of curvatures to a $\delta$-current supported on $Z$ has been established in~\cite{Doan18_AdiabaticLimits}.

\bibliography{references}
\end{document}